\newtheorem{theorem}{Theorem}[section] 
\newtheorem{remark}[theorem]{Remark} 
\newtheorem{lemma}[theorem]{Lemma} 
\newtheorem{definition}[theorem]{Definition} 
\newtheorem{proposition}[theorem]{Proposition} 
\newenvironment{proof}[1][Proof]{\textbf{#1.} }{\  
 \rule{0.5em}{0.5em}\\}
\newcommand{\R}{\mathbb{R}}
\newcommand{\N}{\mathbb{N}}
\newcommand{\intbar}{-\kern-1em\int}
\title{Nonlinear balance and asymptotic behavior of supercritical 
  reaction-diffusion equations with nonlinear boundary conditions
}
\author{An\'{\i}bal Rodr\'{\i}guez-Bernal
\thanks{Partially supported by Project MTM2009--07540, MEC and   GR58/08
  Grupo 920894, UCM, Spain.}
\and
Alejandro Vidal-L\'opez
\thanks{Supported by Marie Curie IEF 252078 ULD3DNSE, EU}}
\newcommand{\eps}{\varepsilon}
\renewcommand{\epsilon}{\varepsilon}
\begin{document}

\maketitle

\begin{center}
${}^{*}$Departamento de Matem\'atica Aplicada \\
Universidad Complutense de Madrid, \\ Madrid 28040,  SPAIN \\ and \\
Instituto de Ciencias Matem\'aticas \\
CSIC-UAM-UC3M-UCM \\ 
arober@mat.ucm.es \\ \mbox{}\\
${}^{\dag}$Mathematics Institute \\ 
University of Warwick. UK. \\
A.Vidal-Lopez@warwick.ac.uk
\end{center}

\section{Introduction} 

When one considers reaction diffusion problems with nonlinear boundary (flux)
conditions one typically faces problems in which, in a natural way, two
different nonlinear mechanisms, of very different nature,
compete. Namely, 
interior reaction and boundary flux.  In this context it is therefore a natural
question to understand which is the nonlinear balance between these two
competing nonlinear mechanisms. From the mathematical point of view a delicate
technical problem is also to determine a large class of initial data for which
the problem can be solved. Once this is done the balance between the nonlinear
terms will determine the subsequent behavior of the solutions.

\medskip

Let us consider the following problem
  \begin{equation}
  \label{eq:pbFG:monotonic}
  \left\{\begin{array}{rclcl}
    u_t  - \Delta u + f(u) & =
    & 0 & \mathrm{in} & \Omega\\
    \displaystyle\frac{\partial u}{\partial \vec{n}}&=& g(u) &
    \mathrm{on} & \Gamma\\
    u(0) &=& u_0
  \end{array}\right.
\end{equation}
in a bounded domain $\Omega \subset \R^N$, $N\geq 1$.  The
prototype nonlinearities we consider behave like
\begin{displaymath}
\label{eq:model:nonlinearities}
  f(s) \sim c_{f} |s|^{p-1} s,\qquad g(s) \sim c_{g} |s|^{q-1} s
\end{displaymath}
for $|s| \to \infty$, with $c_{f}, c_{g} >0$. More precisely, we
suppose  that $f$ and $g$ satisfy
\begin{equation}
  \label{eq:mainHyp:fg}
  pc_f |s|^{p-1} - A_0\leq f^\prime(s) \leq pC_f|s|^{p-1} + A_1 , 
\qquad 
  qc_g |s|^{q-1} - B_0\leq g^\prime(s) \leq qC_g|s|^{q-1} + B_1,
\end{equation}
for some $c_f, c_g ,C_f,C_g>0$, $A_0, A_1,B_0,B_1>0$, $1<p,q<\infty$.

These assumptions imply that there is an actual competition in
(\ref{eq:pbFG:monotonic}) between nonlinear terms. On one hand, $f$
has a dissipative character and tries to make solutions global and
bounded. On the other hand $g$ has an explosive nature and tries to
make solutions blow up in finite time; see \cite{chipotFilaQuittner91:_station,Tajdine2001}. As it is
currently known, and explained below, the nonlinear balance between these two
terms in (\ref{eq:pbFG:monotonic}) is given by  by the relationship between $p+1$ and
$2q$. In fact when 
\begin{equation} \label{eq:nonlinearbalance_fg}
  p+1 > 2q 
\end{equation}
 the dissipative character of $f$ dominates the
explosive one of $g$; see below. 

\medskip

We now review some results already known for
(\ref{eq:pbFG:monotonic}). First, concerning local existence it was
shown in \cite{arrieta99:_parab} that considering initial data in $L^{r}(\Omega)$,
(\ref{eq:pbFG:monotonic}) is locally well posed provided 
\begin{equation}
\label{eq:Lr:growth:subcrit}
  p \leq p_{c} = 1 + {2r \over N}, \qquad   q \leq q_{c} = 1 + {r
    \over N}  
\end{equation}
with $q < q_{c} = 1 + r$, if $N=1$.

The numbers $p_{c}$ and $q_{c}$ above are the so-called critical exponents. The
problem (\ref{eq:pbFG:monotonic}) is said to be subcritical if $p<p_{c}$ and
$q<q_{c}$, and critical otherwise. See
\cite{Weissler1980:_Local-existence,Weissler1981:_Existence-and-n,brezis96} for
related results.

It was also proved in \cite{ARB04_nonwell_posed} that in general supercritical problems,
i.e. either $p>p_{c}$ or $q>q_{c}$, are ill posed.

As for the asymptotic behavior of solutions, (\ref{eq:pbFG:monotonic})
is studied    in \cite{Tajdine2001} 
in a subcritical $H^{1}(\Omega)$  setting (with suitable critical
exponents $p_{c}= 1 + {4 \over N-2}$ and $q_{c}= 1 + {2 \over N-2}$
for this space). Assuming the nonlinear balance condition
(\ref{eq:nonlinearbalance_fg}) and using a natural energy  estimate involving the
gradient, it was proved that (\ref{eq:pbFG:monotonic}) is dissipative
and the asymptotic behavior of solutions is described by a global
compact attractor in $H^{1}(\Omega)$ which typically has a precise
geometrical structure, since the problem has a gradient
structure. Note that in this case the energy acts as a Lyapunov
functional along solutions, which simplifies a lot the dynamics. 

On the other hand
if $p+1 < 2q$ the problem is not dissipative and it was shown in
\cite{Tajdine2001} that  there always exists solutions
that blow up in finite time. The case  when $p+1=2q$ depends on
the balance of the coefficients of the leading or even lower order
terms in $f$ and $g$. See  also 
\cite{arrieta08}. 

In \cite{arrieta00:_attrac_unif_bounds},  (\ref{eq:pbFG:monotonic}) was
studied in a subcritical regime and assuming a linear balance between
nonlinear terms. However for (\ref{eq:pbFG:monotonic}) under
assumptions (\ref{eq:mainHyp:fg}) and (\ref{eq:nonlinearbalance_fg})
no such linear balance holds.

In  \cite{Rodriguez-Bernal2002} the subcritical and
critical problems in $L^{r}(\Omega)$ (i.e, $1<p\leq p_C$, $1 < q\leq q_c$) were
considered. Again under the balance conditions (\ref{eq:mainHyp:fg})
and (\ref{eq:nonlinearbalance_fg}) it was shown that
(\ref{eq:pbFG:monotonic}) is dissipative and has well defined
asymptotic behavior in terms of a global compact attractor. In the
critical case a less conclusive result is obtained. See Theorem
\ref{th:attractor_subcritical} below for a precise statement. 

Note that for the setting in $L^{r}(\Omega)$, $r\neq 2$,  the  main
difficulty is to obtain gradient estimates to obtain compactness of
solutions. In this case no natural energy seems to be available.

\medskip

Observe that if we assume (\ref{eq:nonlinearbalance_fg}) and define  
\begin{equation} \label{eq:critical_space}
r_0 =\max\left\{\frac{N}{2}(p-1),N(q-1)\right\} = \frac{N}{2}(p-1)
\end{equation}
 then $r_{0} >1$ if $p> 1 + {2 \over N}$ and if  $r> r_0$ problem 
(\ref{eq:pbFG:monotonic}) is subcritical in $L^{r}(\Omega)$ while it
is critical if $r=r_{0}$, and supercritical if $1<r<r_{0}$. Thus if
$r\geq r_{0}$, given $u_0\in L^r(\Omega)$ we know that there exists a
unique global solution of problem (\ref{eq:pbFG:monotonic}) and
moreover there exists a global compact attractor.

Therefore in this paper we will assume hereafter that  
\begin{displaymath}
p> 1 + {2  \over N}  
\end{displaymath}
and  the main goal is the to prove that under
assumption (\ref{eq:nonlinearbalance_fg}), for the
supercritical case  $1<r < r_{0}$  still we have
global bounded solutions and a global compact attractor for
(\ref{eq:pbFG:monotonic}). Moreover this attractor coincides with the
one for $r>r_{0}$.  

Since the problem is supercritical in $L^{r}(\Omega)$ for $1<r<r_{0}$
we must then define suitable solutions of
(\ref{eq:pbFG:monotonic}). Then we need to obtain suitable and strong
estimates on the solutions which guarantee enough compactness (or
smoothing) to  prove the existence of an attractor. This estimates
will show that solutions of (\ref{eq:pbFG:monotonic}) enter into
spaces in which the problem is subcritical and then they are attracted
to the attractor of the subcritical case. As before the main
difficulty is to obtain gradient estimates on the solutions.

Our last contribution is to show that within the global attractor
there exist extremal equilibria as in
\cite{Rodr'iguez-Bernal2008}. These equilibria have the property that
the asymptotic behavior of any solutions lies below the maximal one
and above the minimal one. In particular any other equilibria lies in
between these two. Also the maximal equilibria is order stable from
above and the minimal one is order stable from below. Hence, they are
the ``caps'' of the global attractor. So far such equilibria have been
shown to exists in problems like (\ref{eq:pbFG:monotonic}) but in which a
suitable linear balance as in \cite{arrieta00:_attrac_unif_bounds} holds; see
\cite{Rodr'iguez-Bernal2008}. For (\ref{eq:pbFG:monotonic}) under
assumptions (\ref{eq:mainHyp:fg}) and (\ref{eq:nonlinearbalance_fg})
no such linear balance holds and \cite{Rodr'iguez-Bernal2008} does not
apply.

The paper is organized as follows. In Section
\ref{sec:known-results-subcr} we recall known  results  concerning existence of
solutions of (\ref{eq:pbFG:monotonic}) in subcritical or critical
cases in $L^r(\Omega)$. Observe that these  results only take into
account the growth of nonlinear terms and not the signs of $f$ and
$g$. Then, assuming (\ref{eq:mainHyp:fg}) and
(\ref{eq:nonlinearbalance_fg}) we also recall the results  on the
asymptotic behavior of solutions  of (\ref{eq:pbFG:monotonic}) in
subcritical or critical cases; see Theorem
\ref{th:attractor_subcritical} below.  

Then in Section \ref{sec:local-exist-lr-supercrit} we construct
suitable  solutions of problem
(\ref{eq:pbFG:monotonic}), assumed (\ref{eq:mainHyp:fg}) and
(\ref{eq:nonlinearbalance_fg}),  starting at $u_0\in L^r(\Omega)$ for
$1<r<r_0$ with $r_{0}$ as in (\ref{eq:critical_space}),  so that 
the problem is supercritical in $L^r(\Omega)$. For this, we start by proving the
existence and uniqueness of solutions in $L^r(\Omega)$ for an approximated problem in which we
truncate the nonlinear term in the boundary in such a way that the resulting
problem is supercritical in the interior while subcritical on the
boundary, see Theorems \ref{thr:truncated_problem_well_posed} and
\ref{th:uniqueness:truncate}. Since the supercritical nonlinear term
has a good sign, we will be able to obtain  suitable uniform bounds 
for these solutions  that will allow us to  construct a solution of
(\ref{eq:pbFG:monotonic}).  See Propositions
\ref{prop:bounds_vK_Lsigma}, \ref{prop:regFromBounds} and
\ref{prop:unifBound:genSols}, 
Definition \ref{def:solution_supercritical} and Theorem
\ref{thr:continuity_at_t=0}. Observe we could only guarantee
uniqueness of solutions in the case of nonnegative initial data in
(\ref{eq:pbFG:monotonic}), see Proposition \ref{prop:positiveSolns}
but not for general sign changing solutions.  

Then in Section \ref{sec:asymptotic-behaviour} we use the  strong
smoothing estimates  obtained in Section
\ref{sec:local-exist-lr-supercrit} to show that all solutions of
(\ref{eq:pbFG:monotonic}), regularize into an space in which 
(\ref{eq:pbFG:monotonic}) is subcritical and then the asymptotic
behavior of solutions is described by the global attractor in Theorem
\ref{th:attractor_subcritical}. We also show the existence of the
extremal equilibria in the attractor as discussed above; see Theorem
\ref{thm:existExtremal:supcrit}. Note that the  result in Section \ref{sec:asymptotic-behaviour} improve
the known result in Theorem \ref{th:attractor_subcritical} for the
critical case $r=r_{0}$.

\section{Known results for the subcritical or critical cases} 
\label{sec:known-results-subcr}
\setcounter{equation}{0}

We recall now the results in \cite{arrieta99:_parab} concerning existence of
solutions in subcritical or critical cases in $L^r(\Omega)$. These results only
take into account the growth of nonlinear terms and not the signs of $f$ and
$g$. Thus, we assume momentarily that $f,g\in C^{1}(\R)$ and satisfy
\begin{displaymath}
  \limsup_{|s|\to \infty} \frac{|f^\prime(s)|}{|s|^{p-1}} <\infty
    \quad\mathrm{and}\quad
    \limsup_{|s|\to \infty} \frac{|g^\prime(s)|}{|s|^{q-1}} <\infty
\end{displaymath}
and  $p,q$ satisfy
(\ref{eq:Lr:growth:subcrit}), i.e. 
\begin{displaymath}
    p \leq p_{c} = 1 + {2r \over N}, \qquad   q \leq q_{c} = 1 + {r
    \over N}, \quad  1<r<\infty  . 
\end{displaymath}
We will also make use of some  Bessel spaces in $L^{r}(\Omega)$  of order $2\theta$,
$H_{r}^{2\theta}(\Omega)$; see
\cite{arrieta99:_parab,amann93:_nonhom,triebel95:_inter}. These spaces
are Bessel spaces associated to $\Delta$ with Neumann boundary 
conditions in $L^r(\Omega)$. Hence, if $2\theta > 1 + {1 \over r}$
these spaces incorporate Neumann boundary conditions.

From \cite{arrieta99:_parab}, for each $u_{0} \in L^{r}(\Omega)$, there
exist $R = R(u_{0}) >0$ and $\tau = \tau(u_{0}) >0$ such that for any $u_1 \in
L^{r}(\Omega)$ with $\|u_1 - u_0\|_{L^{r}(\Omega)} < R$ there exists a
solution of (\ref{eq:pbFG:monotonic}), with initial data $u_{1}$,
$u(\cdot;u_{1})$,  in
the sense that it is a continuous function $u:[0,\tau_0]\to
L^{r}(\Omega)$   with $u(0)= u_1$, such that $u\in
C([0,\tau];L^r(\Omega))\cap C((0,\tau]; H^{2\bar\epsilon}_r(\Omega))$
and $\sup_{t\in (0,\tau]} t^{\bar \epsilon}
\|u(t)\|_{H^{2\bar\epsilon}_r(\Omega)} < \infty$,  for some $\bar 
\epsilon>0$ and satisfies  
 the variation of constants formula
\begin{displaymath}
\label{eq:vcf}
  u(t;u_1) = S(t) u_1 
  + \int_0^{t} S(t-s)(-f_\Omega (u(s;u_1)) +g_\Gamma
  (u(s;u_1))\,\mathrm{d}s , \quad 0\leq t \leq \tau, 
\end{displaymath}
where $S(t)$ is the semigroup generated by $\Delta$ with Neumann boundary
conditions in $L^r(\Omega)$, $f_\Omega$ denotes the Nemitsky map of
$f$ acting on functions defined in $\Omega$, and $g_\Gamma$ the
Nemitsky  map of  $g$ acting on functions defined on $\Gamma$.

This is the so called
$\overline\epsilon-$regular solution of 
(\ref{eq:pbFG:monotonic}) starting at $u_1$.  This solution is unique in the
class $C([0,\tau];L^r(\Omega))\cap C((0,\tau];H^{2\bar \epsilon}_r(\Omega))$
and, by a bootstrapping argument, it  is classical for $t>0$.

In addition, this solution satisfies, for some $\underline\gamma >
\overline\epsilon$ and for all $0<\theta< \underline\gamma$,
\begin{equation} \label{epsregsol1}
u \in C((0,\tau]; H_{r}^{2\theta}(\Omega)), \quad \sup_{t\in (0,\tau]}
t^\theta
\|u(t)\|_{H_{r}^{2\theta}(\Omega)} \leq M(R,\tau), \quad
t^\theta \|u(t)\|_{H_{r}^{2\theta}(\Omega)} {\buildrel t\to
  0^+\over\longrightarrow } 0 . 
\end{equation}
 Moreover, if $u_1,
v_1\in B_{L^{r}(\Omega)}(u_0, R)$ the following holds true for $t\in
(0,\tau]$, and $0\le \theta\leq \theta_0<\underline \gamma$, 
\begin{equation} \label{epsregsol2}
\sup_{t\in (0,\tau]} t^\theta \|u(t; u_1) - u(t; v_1)\|_{H_{r}^{2\theta}(\Omega)}
\leq C(\theta_0,\tau) \|u_1 - v_1\|_{L^{r}(\Omega)}.  
\end{equation}
Note that we can always take $\underline \gamma \geq 1/2$ which allows to
perform a bootstrap argument to prove that solutions become classical for
positive times.

If both of the nonlinearities are subcritical, i.e. $p<p_{c}$ and
$q<q_{c}$ in (\ref{eq:Lr:growth:subcrit}),  then $R$ can be taken arbitrarily
large and so, the existence time can be taken uniform on bounded sets of
$L^{r}(\Omega)$. As a consequence, and following a standard prolongation
argument, when $f$ and $g$ are subcritical in $L^r(\Omega)$, if the solution
exists up to a maximal time $T<\infty$ then $\lim_{t\rightarrow T}
{\left\|u(t)\right\|}_{L^{r}(\Omega)} = \infty$. However, when $f$ or $g$ are
critical, if $T<\infty$ then $\lim_{t\rightarrow T}
{\left\|u(t)\right\|}_{H^{\delta}_r(\Omega)} = \infty$ for any
$\delta>0$. 
Therefore, in the subcritical case to prove global existence it is enough to
obtain bounds on the $L^{r}(\Omega)$--norm of the solution while in the critical
case stronger estimates must be obtained.

On the other hand, if we assume now (\ref{eq:mainHyp:fg}), that is, 
\begin{displaymath}
  pc_f |s|^{p-1} - A_0\leq f^\prime(s) \leq pC_f|s|^{p-1} + A_1
\quad \mathrm{and}\quad 
  qc_g |s|^{q-1} - B_0\leq g^\prime(s) \leq qC_g|s|^{q-1} + B_1, 
\end{displaymath}
for $s\in \R$ and some $c_f, c_g ,C_f,C_g>0$, $A_0, A_1,B_0,B_1>0$,  and
(\ref{eq:nonlinearbalance_fg}), i.e. 
\begin{displaymath}
    p+1 > 2q , 
\end{displaymath}
we can obtain
information on the  asymptotic behavior of the solutions of the
problem. 

The next result summarizes the results in 
\cite{Rodriguez-Bernal2002}. Note that
condition (\ref{eq:nonlinearbalance_fg}) implies that the dissipative
character of $f$ dominates the explosive nature of $g$. As a result of
this nonlinear balance,  (\ref{eq:pbFG:monotonic}) is dissipative as the
next theorem shows. 

\begin{theorem} 
\label{th:attractor_subcritical}

Assume that $f$ and $g$ satisfy
  (\ref{eq:mainHyp:fg}) and 
  (\ref{eq:nonlinearbalance_fg}) and define $r_0$ as in
  (\ref{eq:critical_space}). Then we have, 

  \noindent i) Problem (\ref{eq:pbFG:monotonic}) is well-posed in $L^r(\Omega)$
  for any $r\geq r_0$,   and the
  solutions are globally defined, and classical for $t>0$.

\noindent ii) For $r\geq r_0$, there exists an absorbing ball in
$L^{r}(\Omega)$ and the orbit of any bounded set of
$L^{r}(\Omega)$ is bounded in $L^{r}(\Omega)$, for $t\geq 0$. 

For $r=r_{0}$, orbits of compact sets in     $L^{r_0}(\Omega)$
remain compact in $L^{r_0}(\Omega)$.

\noindent iii) If $r>r_{0}$,  (\ref{eq:pbFG:monotonic}) has a compact
global attractor $\mathcal{A}$ in $L^r(\Omega)$ which attracts bounded
sets     of $L^r(\Omega)$.  

For $r=r_{0}$,  there exists a maximal, compact, invariant and
connected set $\mathcal{A}$ in $L^{r_0} (\Omega)$ which attracts a
neighborhood of each initial data in     $L^{r_0} (\Omega)$ (and, in
particular, compact sets of $L^{r_0} (\Omega)$). 
    
For $r\geq r_{0}$ the attractor can be described as the unstable set of
the set of equilibria of (\ref{eq:pbFG:monotonic}), $E$,  which is nonempty;
that is 
\begin{displaymath}
\mathcal{A} = W^{u}(E) .   
\end{displaymath}

\noindent iv) For $r\geq r_{0}$, the attractor $\mathcal{A}$ belongs to
$H^{\alpha}_{s}(\Omega)$ and it attracts in the norm of
$H^{\alpha}_{s}(\Omega)$ for any $s\geq 1$ and $0\leq\alpha<1 + \frac{1}{s}$ and
in $C^\beta(\overline\Omega)$ for any $0\leq\beta<1$. 

In particular, if $r>r_{0}$ there is an absorbing set in
$H^{\alpha}_{s}(\Omega)$  and, for every $\eps>0$,  the orbit of bounded sets in
$L^{r}(\Omega)$ is bounded in $H^{\alpha}_{s}(\Omega)$ for $t\geq
\eps$. 
\end{theorem}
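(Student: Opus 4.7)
The plan is to combine the local $\bar\epsilon$-regular existence theory from Section \ref{sec:known-results-subcr} with an a priori $L^r$ bound obtained from a nonlinear energy estimate that exploits the balance condition (\ref{eq:nonlinearbalance_fg}). First I would fix $r\ge r_0$, take a local solution $u$ provided by the previous section (classical for $t>0$ by bootstrap), multiply the PDE by $|u|^{r-2}u$ and integrate by parts using the flux condition on $\Gamma$, getting
\begin{equation*}
\frac{1}{r}\frac{d}{dt}\|u\|_{L^r(\Omega)}^r + (r-1)\int_\Omega |u|^{r-2}|\nabla u|^2 + \int_\Omega f(u)\,u\,|u|^{r-2} = \int_\Gamma g(u)\,u\,|u|^{r-2}\,d\sigma.
\end{equation*}
By (\ref{eq:mainHyp:fg}) the interior term is bounded below by $c\int_\Omega |u|^{p+r-1}-C|\Omega|$, while the boundary term is controlled above by $C\int_\Gamma |u|^{q+r-1}\,d\sigma + C$.

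The key step is then a trace/interpolation inequality applied to $v=|u|^{(r+p-1)/2}\operatorname{sgn}(u)$, whose $H^1(\Omega)$ norm controls $\int_\Gamma v^s$ for a range of $s$, combined with the gradient term $\int_\Omega |u|^{r-2}|\nabla u|^2\sim \|\nabla v\|_{L^2}^2$. Because $r\ge r_0=\tfrac N2(p-1)$, the Sobolev/trace exponents line up so that the boundary integral $\int_\Gamma |u|^{q+r-1}$ can be estimated by an interpolation between $\|v\|_{L^2}^2$ and $\|v\|_{H^1}^2$, and precisely condition (\ref{eq:nonlinearbalance_fg}), $p+1>2q$, keeps the resulting exponent on $\|\nabla v\|_{L^2}^2$ strictly less than one. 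Young's inequality then absorbs this term into the interior dissipation $\int_\Omega |u|^{p+r-1}\sim\|v\|_{L^{2(p+r-1)/(r+p-1)}}^{2(r+p-1)/(r+p-1)}$ plus gradient, yielding a differential inequality of the form
\begin{equation*}
\frac{d}{dt}\|u\|_{L^r}^r + \delta\|u\|_{L^{r+p-1}}^{r+p-1}\le K,
\end{equation*}
from which a uniform absorbing set in $L^r(\Omega)$ follows by Gronwall-type arguments. This gives (i) and (ii) for $r>r_0$; the critical case $r=r_0$ requires keeping track of the continuity of the existence time on compact sets rather than balls, and using the full smoothing estimate (\ref{epsregsol1}) to propagate compactness.

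For (iii), once an absorbing ball in $L^r(\Omega)$ is in hand, I would use (\ref{epsregsol1}) to see that after an arbitrarily small time $t_0>0$ the orbit of the absorbing set is bounded in $H^{2\theta}_r(\Omega)$, hence precompact in $L^r(\Omega)$. Standard semigroup attractor theory then produces the global attractor $\mathcal A$ for $r>r_0$; in the critical case $r=r_0$ one only gets the maximal compact invariant set attracting a neighborhood of each point, because uniform existence on balls fails. The identification $\mathcal A=W^u(E)$ follows from the Lyapunov functional
\begin{equation*}
\mathcal L[u] = \frac{1}{2}\int_\Omega |\nabla u|^2 + \int_\Omega F(u) - \int_\Gamma G(u),\qquad F'=f,\ G'=g,
\end{equation*}
which decreases strictly along nonequilibrium orbits, together with nonemptiness of $E$ (obtained, e.g., by minimizing $\mathcal L$, using the coercivity from (\ref{eq:mainHyp:fg}) and (\ref{eq:nonlinearbalance_fg})).

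Finally (iv) is a bootstrap: using the variation of constants formula and the smoothing estimates of the Neumann heat semigroup acting on both the interior Nemitsky term and the boundary lift of $g_\Gamma(u)$, each step raises the regularity of invariant orbits, eventually placing $\mathcal A$ in every $H^\alpha_s(\Omega)$ with $0\le\alpha<1+1/s$ and in $C^\beta(\overline\Omega)$ for $\beta<1$; the attraction in these stronger norms is then standard from the continuity of the semigroup in $H^\alpha_s$. The main obstacle throughout is the first one: constructing the trace/interpolation inequality that converts (\ref{eq:nonlinearbalance_fg}) into genuine absorption of the destabilizing boundary flux by the interior dissipation in the $L^r$ scale, because outside $H^1(\Omega)$ there is no ready-made energy to serve as a Lyapunov functional at the level of the estimate.
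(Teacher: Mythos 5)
The paper does not reprove this theorem (it is quoted from \cite{Rodriguez-Bernal2002}), but it does reproduce the key a priori estimate behind parts i)--ii), namely Proposition \ref{prop:norm:ineq:bounds}, so the relevant comparison is with that. Your overall architecture --- local $\bar\epsilon$-regular theory, the $L^r$ identity from testing with $|u|^{r-2}u$, absorption of the boundary flux by the interior dissipation, smoothing via (\ref{epsregsol1}) for compactness, the gradient structure for $\mathcal{A}=W^u(E)$ (legitimate here since orbits are classical for $t>0$), and a bootstrap for iv) --- matches the reference. The gap is precisely at the step you yourself flag as the main obstacle. Testing with $|u|^{r-2}u$ yields gradient control only of $|u|^{r/2}$, since $\int_\Omega|u|^{r-2}|\nabla u|^2=\frac{4}{r^2}\|\nabla(|u|^{r/2})\|_{L^2(\Omega)}^2$; it does \emph{not} control $\|\nabla v\|_{L^2}^2$ for your $v=|u|^{(r+p-1)/2}\mathrm{sgn}(u)$, which would require $\int_\Omega|u|^{r+p-3}|\nabla u|^2$. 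So the $H^1$ norm of $v$ that your trace interpolation needs is simply not available from the energy identity. If you instead interpolate with $w=|u|^{r/2}$ (the function you do control), the condition for Young's inequality to absorb the gradient part works out to $r>N(q-1)$, i.e.\ mere subcriticality of $q$, and the balance $p+1>2q$ must then come from comparing the leftover low-order term with the dissipation $\int_\Omega|u|^{p+r-1}$ --- a bookkeeping you have not done, and which does not close if the low norm is $\|w\|_{L^2(\Omega)}=\|u\|_{L^r(\Omega)}^{r/2}$: already for $N=2$, $r=r_0$ there are admissible $p,q$ with $2q<p+1$ for which the leftover power exceeds $p+r-1$. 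One would have to interpolate against $\|w\|_{L^{2(p+r-1)/r}(\Omega)}$ instead and verify the exponents.

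The paper's mechanism is different and sidesteps all of this: the boundary integral $\int_\Gamma g(u)|u|^{\sigma-2}u$ is replaced by its interior average $\frac{|\Gamma|}{|\Omega|}\int_\Omega g(u)|u|^{\sigma-2}u$ plus an error bounded, via the $W^{1,1}$ Poincar\'e inequality of Lemma \ref{lemma:PoincareL1}, by $c(\Omega)\|\nabla(g(u)|u|^{\sigma-2}u)\|_{L^1(\Omega)}$; Cauchy--Schwarz against $\nabla(|u|^{\sigma/2})$ then produces an interior term of order $|u|^{2q+\sigma-2}$, and the balance condition (\ref{eq:nonlinearbalance_fg}) enters as the pointwise domination of $|u|^{2q+\sigma-2}$ by $|u|^{p+\sigma-1}$. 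This yields (\ref{eq:energy_NL}) with a constant $A$ independent of $\sigma$ (and of $N$ and $r$), which is what the supercritical analysis in Section \ref{sec:local-exist-lr-supercrit} later relies on; a trace-interpolation route, even if repaired, would give constants entangled with $N$, $r$ and $\sigma$. I would adopt the Poincar\'e-with-boundary-average argument for the key estimate; the remaining parts of your outline are essentially sound.
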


Observe that this result is proved in \cite{Rodriguez-Bernal2002}
using in a critical way the energy estimate (\ref{eq:energy_NL})
below, which is the only estimate available in a non Hilbertian
setting. In \cite{Tajdine2001}, (\ref{eq:pbFG:monotonic}) is studied
in a subcritical $H^{1}(\Omega)$  setting (with suitable critical
exponents $p_{c}= 1 + {4 \over N-2}$ and $q_{c}= 1 + {2 \over N-2}$
for this space) using a different energy estimate involving the
gradient. For the setting in $L^{r}(\Omega)$, $r\neq 2$,  the  main
difficulty is to obtain gradient estimates to obtain compactness. 

Also observe that the estimates in \cite{Rodriguez-Bernal2002} leading
to Theorem \ref{th:attractor_subcritical} are not known to be uniform
with respect to certain classes of  nonlinear terms $f$ and $g$. Should this be true,
some arguments below in Section \ref{sec:local-exist-lr-supercrit}
could be made simpler, see Remark \ref{rem:seria_masfacil}.  

Below we present some of the basic tools needed to prove  Theorem
\ref{th:attractor_subcritical}, see \cite{Rodriguez-Bernal2002}. First
we have the following  Poincar\'e lemma. 

\begin{lemma}
  \label{lemma:PoincareL1}
  There exists a constant $c_0(\Omega)$ such that for any $\varphi \in
  W^{1,1}(\Omega)$
  \begin{displaymath} \label{eq:PoincareL1}
    \left\|\varphi - \frac{1}{|\Gamma|}\int_\Gamma \varphi\right\|_{L^1(\Omega)}
    \leq c_0(\Omega) \|\nabla \varphi\|_{L^1(\Omega)}.
  \end{displaymath}
\end{lemma}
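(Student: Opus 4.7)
The plan is to reduce this to two classical ingredients: the standard $L^1$-Poincar\'e inequality with the interior mean, and the continuity of the trace operator from $W^{1,1}(\Omega)$ into $L^{1}(\Gamma)$. Denote by $\bar{\varphi} = \frac{1}{|\Omega|}\int_\Omega \varphi$ the interior mean and by $c_\Gamma = \frac{1}{|\Gamma|}\int_\Gamma \varphi$ the boundary mean. The idea is simply to compare $\varphi$ with $\bar\varphi$ by Poincar\'e, then compare the two means by using a trace estimate applied to $\varphi - \bar\varphi$.

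First I would invoke the classical Poincar\'e inequality for $W^{1,1}(\Omega)$ on the bounded (connected, Lipschitz) domain $\Omega$: there exists $C_1 = C_1(\Omega)$ such that
\begin{equation*}
  \|\varphi - \bar\varphi\|_{L^1(\Omega)} \le C_1 \|\nabla \varphi\|_{L^1(\Omega)}.
\end{equation*}
In particular $\|\varphi - \bar\varphi\|_{W^{1,1}(\Omega)} \le (1+C_1) \|\nabla \varphi\|_{L^1(\Omega)}$. Next, by the continuity of the trace $W^{1,1}(\Omega) \to L^1(\Gamma)$ (valid on Lipschitz domains), applied to $\varphi - \bar\varphi$, one obtains
\begin{equation*}
  \|\varphi - \bar\varphi\|_{L^1(\Gamma)} \le C_2 \|\nabla \varphi\|_{L^1(\Omega)}
\end{equation*}
for some $C_2 = C_2(\Omega)$. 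Since $\bar\varphi$ is constant, integrating over $\Gamma$ gives
\begin{equation*}
  |\bar\varphi - c_\Gamma| = \frac{1}{|\Gamma|}\left|\int_\Gamma (\bar\varphi - \varphi)\right| \le \frac{C_2}{|\Gamma|} \|\nabla \varphi\|_{L^1(\Omega)}.
\end{equation*}

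Finally, by the triangle inequality
\begin{equation*}
  \|\varphi - c_\Gamma\|_{L^1(\Omega)} \le \|\varphi - \bar\varphi\|_{L^1(\Omega)} + |\Omega|\,|\bar\varphi - c_\Gamma| \le \left(C_1 + \tfrac{|\Omega|}{|\Gamma|}C_2\right) \|\nabla \varphi\|_{L^1(\Omega)},
\end{equation*}
which yields the claim with $c_0(\Omega) := C_1 + \tfrac{|\Omega|}{|\Gamma|}C_2$. The only delicate point is to have both the standard $L^1$-Poincar\'e estimate and the $W^{1,1}$-trace theorem available; both hold under a standing mild regularity assumption on $\Omega$ (e.g. Lipschitz), which is implicit in the framework of the paper. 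As an alternative, one could give a proof by contradiction: if the estimate failed, one would build $\psi_n$ with $\|\psi_n\|_{L^1(\Omega)}=1$, $\int_\Gamma \psi_n = 0$, and $\|\nabla \psi_n\|_{L^1(\Omega)} \to 0$; Rellich compactness of $W^{1,1}(\Omega)\hookrightarrow L^1(\Omega)$ combined with convergence of $\nabla \psi_n$ to $0$ would give strong $W^{1,1}$ convergence to a constant $\psi$, and continuity of the trace would force $\psi = 0$, contradicting $\|\psi\|_{L^1(\Omega)}=1$.
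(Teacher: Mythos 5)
Your proof is correct. Note that the paper itself does not prove this lemma --- it is stated as a known tool and attributed to \cite{Rodriguez-Bernal2002} --- so there is no in-paper argument to compare against; your route (classical $L^1$--Poincar\'e with the interior mean, then the trace estimate $W^{1,1}(\Omega)\to L^1(\Gamma)$ applied to $\varphi-\bar\varphi$ to control the difference of the two means) is the standard one and yields the stated constant, under the implicit regularity and connectedness of $\Omega$ assumed throughout the paper. The alternative compactness/contradiction argument you sketch is also valid.
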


Another key tool to prove Theorem \ref{th:attractor_subcritical} is the
following estimate that holds for any suitable smooth solution of
(\ref{eq:pbFG:monotonic}). This result explains in a precise form the nonlinear
balance (\ref{eq:nonlinearbalance_fg}) between nonlinear terms in the problem
(\ref{eq:pbFG:monotonic}). 
Note that the result in \cite{Rodriguez-Bernal2002} is adapted below
to (\ref{eq:pbFG:monotonic}) with $f$ and $g$ satisfying
(\ref{eq:mainHyp:fg}) and (\ref{eq:nonlinearbalance_fg}). We include
the proof since it will be important in what 
follows.

\begin{proposition}
\label{prop:norm:ineq:bounds}
For a sufficiently smooth solution of (\ref{eq:pbFG:monotonic})  with $f$ and
$g$ satisfying (\ref{eq:mainHyp:fg}) and
(\ref{eq:nonlinearbalance_fg}) we have
\begin{equation} \label{eq:energy_NL} 
{\frac{1}{\sigma}} {\mathrm{d} \over \mathrm{d}t} \left\| u(t)
\right\|^{\sigma}_{L^{\sigma}(\Omega)} +
{\frac{2(\sigma-1)}{\sigma^2}} \int_{\Omega}
\left| \nabla \left( |u|^{\sigma/2} \right) \right|^2 + A \int_\Omega |u|^{\sigma+p-1} \leq B 
\end{equation} 
with $A$  depending on the constants appearing on
(\ref{eq:mainHyp:fg}) but not on $\sigma$ and $B>0$ depending also on
$\sigma$. 

\end{proposition}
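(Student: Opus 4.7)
The plan is to test the PDE against $|u|^{\sigma-2}u$ and integrate over $\Omega$. The time derivative yields $\frac{1}{\sigma}\frac{d}{dt}\|u\|_{L^\sigma(\Omega)}^\sigma$; the Laplacian term, after Green's formula using $\partial u/\partial\vec n = g(u)$ on $\Gamma$, produces the identity
\begin{displaymath}
\frac{1}{\sigma}\frac{d}{dt}\|u\|_{L^\sigma(\Omega)}^\sigma + (\sigma-1)\int_\Omega |u|^{\sigma-2}|\nabla u|^2 + \int_\Omega f(u)|u|^{\sigma-2}u \;=\; \int_\Gamma g(u)|u|^{\sigma-2}u.
\end{displaymath}
Since $\nabla(|u|^{\sigma/2}) = \frac{\sigma}{2}|u|^{\sigma/2-1}\mathrm{sgn}(u)\nabla u$, one has $(\sigma-1)\int_\Omega |u|^{\sigma-2}|\nabla u|^2 = \frac{4(\sigma-1)}{\sigma^2}\int_\Omega |\nabla(|u|^{\sigma/2})|^2$. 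The plan is to keep half of this gradient term, $\frac{2(\sigma-1)}{\sigma^2}\int_\Omega|\nabla(|u|^{\sigma/2})|^2$, as the desired term in the conclusion, and use the other half to absorb the boundary contribution.

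Next, I would exploit the lower bound on $f'$ from (\ref{eq:mainHyp:fg}): integrating it gives $f(s)\mathrm{sgn}(s) \geq c_f|s|^p - K$ for a constant $K$, so that $f(u)|u|^{\sigma-2}u \geq c_f|u|^{p+\sigma-1} - K|u|^{\sigma-1}$. A Young inequality on the second term (using $p>1$) absorbs $K|u|^{\sigma-1}$ into a small multiple of $|u|^{p+\sigma-1}$ plus a constant, yielding
\begin{displaymath}
\int_\Omega f(u)|u|^{\sigma-2}u \;\geq\; 2A\int_\Omega |u|^{p+\sigma-1} - C_1,
\end{displaymath}
where $A$ depends only on $c_f,C_f,A_0,A_1$ (not on $\sigma$). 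Similarly, integrating the bound on $g'$ gives $|g(s)|\leq C|s|^q + D$, so
\begin{displaymath}
\left|\int_\Gamma g(u)|u|^{\sigma-2}u\right| \;\leq\; C\int_\Gamma |u|^{q+\sigma-1} + C'.
\end{displaymath}

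The heart of the proof is to dominate $\int_\Gamma|u|^{q+\sigma-1}$ by the two good quantities on the left. Writing $v = |u|^{\sigma/2}$ so that $\int_\Gamma |u|^{q+\sigma-1} = \int_\Gamma v^{\alpha}$ with $\alpha = 2(q+\sigma-1)/\sigma$, I would invoke a boundary Gagliardo--Nirenberg inequality interpolating $\|v\|_{L^\alpha(\Gamma)}$ between $\|\nabla v\|_{L^2(\Omega)}$ and $\|v\|_{L^\beta(\Omega)}$, choosing $\beta = 2(p+\sigma-1)/\sigma$ so that $\|v\|_{L^\beta(\Omega)}^\beta = \|u\|_{L^{p+\sigma-1}(\Omega)}^{p+\sigma-1}$. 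Raising to the $\alpha$-th power and applying Young's inequality splits the right-hand side into a term of the form $\epsilon\int_\Omega|\nabla v|^2 + \delta\int_\Omega|u|^{p+\sigma-1} + C_\sigma$, with $\epsilon,\delta$ freely adjustable. Choosing $\epsilon$ so that $C\epsilon < \frac{2(\sigma-1)}{\sigma^2}$ and $\delta$ so that $C\delta < A$ makes both terms absorbable by the reserved halves on the left, leaving the inequality (\ref{eq:energy_NL}) with $B = C_1 + C' + C_\sigma$.

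The main obstacle is the boundary interpolation step: verifying the scaling relation so that the exponent on $\|\nabla v\|_{L^2}$ in the Young application can indeed be taken as exactly $2$ (so the absorption is linear) while simultaneously the exponent on $\|v\|_{L^\beta}$ stays strictly below $\beta$. This is precisely where the structural hypothesis $p+1>2q$ enters: it is equivalent to saying that the conjugate exponent needed in Young's inequality is admissible, i.e.\ that $q+\sigma-1$ is sufficiently below the exponent governed by the trace of $H^1$ applied to $|u|^{\sigma/2}$. In borderline cases one may have to handle the mean-value component of $v$ separately, for which the $L^1$-Poincar\'e estimate of Lemma \ref{lemma:PoincareL1} is the standard tool; modulo this care, the $\sigma$-dependence is concentrated entirely in the additive constant $B$, while the coefficient $A$ remains determined only by the structural constants in (\ref{eq:mainHyp:fg}).
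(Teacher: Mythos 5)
Your proposal reaches the stated inequality by a route that is genuinely different from the paper's, and the exponent arithmetic you defer does in fact close: with $v=|u|^{\sigma/2}$, $\alpha=2(q+\sigma-1)/\sigma$ and $\beta=2(p+\sigma-1)/\sigma$, the trace Gagliardo--Nirenberg step followed by Young absorbs into $\eps\|\nabla v\|_{L^2(\Omega)}^2+\delta\|v\|_{L^\beta(\Omega)}^\beta$ precisely when $\beta>2(\alpha-1)$, which simplifies to $p+1>2q$ independently of $\sigma$ and of $N$. The paper avoids trace interpolation altogether: it rewrites $\int_\Omega f(u)|u|^{\sigma-2}u-\int_\Gamma g(u)|u|^{\sigma-2}u$ as an interior integral plus the deviation of $g(u)|u|^{\sigma-2}u$ from its boundary mean, controls that deviation by $c(\Omega)\,\|\nabla\big(g(u)|u|^{\sigma-2}u\big)\|_{L^1(\Omega)}$ via the $L^1$--Poincar\'e inequality of Lemma \ref{lemma:PoincareL1}, and then a single Cauchy--Schwarz produces the pointwise quantity $\big(g'(u)u+(\sigma-1)g(u)\big)^2\sim|u|^{2q}$, which is dominated by $f(u)u\sim|u|^{p+1}$ exactly when $p+1>2q$. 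The paper's argument is thus more elementary (no Sobolev trace theory) and makes the origin of the exponent $2q$ transparent as the \emph{square} of $g$; yours follows the more standard energy-method template but needs the trace interpolation stated with the full $H^1(\Omega)$ norm, whose $L^2(\Omega)$ component equals $\|u\|_{L^\sigma(\Omega)}^{\sigma/2}$ and must also be absorbed, e.g.\ via $|u|^{\sigma}\leq\delta|u|^{\sigma+p-1}+C_{\delta,\sigma}$ --- a small point worth making explicit. Both routes keep $A$ independent of $\sigma$ and push all the $\sigma$-dependence into $B$.
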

\begin{proof}{}
For a sufficiently smooth solution of (\ref{eq:pbFG:monotonic}), 
multiplying (\ref{eq:pbFG:monotonic}) by  $|u|^{\sigma-2}u$ and
integrating by parts, we get
\begin{displaymath} 
{\frac{1}{\sigma}} {\mathrm{d} \over\mathrm{d}t} \left\| u(t)
\right\|^{\sigma}_{L^{\sigma}(\Omega)} +
{\frac{4(\sigma-1)}{\sigma^2}} \int_{\Omega}
\left| \nabla \left( |u|^{\sigma/2} \right) \right|^2 + \int_\Omega
f(u)|u|^{\sigma-2} u - \int_{\Gamma} g(u)|u|^{\sigma-2} u =0.
\end{displaymath}
The last two terms can be rewritten as
$$
\int_\Omega\left(f(u) |u|^{\sigma-2} u - 
{|\Gamma|\over|\Omega|} g(u) |u|^{\sigma-2} u\right) + 
{|\Gamma|\over|\Omega|} \int_\Omega
\left(g(u) |u|^{\sigma-2} u-\frac{1}{|\Gamma|}\int_\Gamma
 g(u) |u|^{\sigma-2} u\right)
$$
and  Lemma \ref{lemma:PoincareL1}  gives, for some $c(\Omega)$ 
$$
\left|{|\Gamma| \over| \Omega|} \int_\Omega \left(g(u)
|u|^{\sigma-2} u- \frac{1}{|\Gamma|}\int_\Gamma g(u) |u|^{\sigma-2} u\right) \right|
\leq  c(\Omega) \|\nabla(g(u) |u|^{\sigma-2} u)\|_{L^1(\Omega)} .
$$
Taking derivatives and arranging terms, the right hand side above  can
be written as
$$
 c(\Omega) \left\| \left({2 \over \sigma} g'(u)  u + {2
(\sigma-1) \over \sigma} g(u) \right) |u|^{\sigma/2-1} \left| \nabla |u|^{\sigma/2}
\right| \right\|_{L^1(\Omega)}
$$
which can be bounded by
$$
\eps\|\nabla |u|^{\sigma/2} \|^2_{L^2(\Omega)}+
{c^2(\Omega)\over 4\eps}
\left\| \left({2 \over \sigma} g'(u)  u + {2
(\sigma-1) \over \sigma} g(u) \right) |u|^{\sigma/2-1} \right\|^{2}_{L^2(\Omega)}
$$
for any $\eps > 0$. Therefore, we get
\begin{equation} \label{eq:energyLsigma} 
{\frac{1}{\sigma}} {d \over dt} \left\| u(t)
\right\|^{\sigma}_{L^{\sigma}(\Omega)} +
\left( {\frac{4(\sigma-1)}{\sigma^2}} - \eps\right) \int_{\Omega}
\left| \nabla \left( |u|^{\sigma/2} \right) \right|^2 + \int_\Omega
H_{\sigma}(u) |u|^{\sigma-2} \leq 0 
\end{equation}
where
\begin{displaymath}
\label{eq:def:Hsigma}
  H_{\sigma}(u)  = f(u)u - {|\Gamma|\over|\Omega|}g(u)u-
{c^2(\Omega)\over \eps \sigma^{2}} \left( g'(u)  u +
(\sigma-1)  g(u)\right)^2.
\end{displaymath}

From (\ref{eq:mainHyp:fg}) we have that, 
\begin{displaymath}
  f(u)u \geq D_{0} |u|^{p+1} - D_1, \quad 
  g(u) u \leq D_{2} |u|^{q+1} + D_1
\end{displaymath}
\begin{displaymath}
  (g^\prime (u) u + (\sigma -1) g(u))^2
\leq 2( |g^\prime (u)|^2 u^2 + (\sigma -1)^{2}|g(u)|^2)
\leq D_3(|u|^{2q} + 1)
\end{displaymath}
and so, 
\begin{displaymath}
  H_{\sigma}(u) \geq D_{0} |u|^{p+1} - D_2\frac{|\Gamma|}{|\Omega|}|u|^{q+1}
- D_3 \frac{c^2(\Omega)}{\eps \sigma^2} |u|^{2q} - D_4 
\end{displaymath}
for some $D_1,\ldots, D_4>0$ where $D_{0}, D_{1}, D_{2}$ do not depend on
$\sigma$. Since $p+1 > 2q$ then we 
have  $H_{\sigma}(u) \geq D_{5}  |u|^{p+1} - D_{6}$ for some positive
constants $D_{5} < D_{0}$  which depends only on $p,q$ and
the constants in (\ref{eq:mainHyp:fg}) and $D_{6}$ depends also in
$\sigma$ and $\eps$. From this we get  
\begin{displaymath} 
\label{eq:importantProperty:Hsigma}
  H_{\sigma}(u) |u|^{\sigma-2} \geq A |u|^{\sigma+p-1} -B, \quad u \in \R.
\end{displaymath}
with $A$ not depending on $\sigma$.  Then taking $\eps=
\frac{2(\sigma-1)}{\sigma^2}$  in (\ref{eq:energyLsigma}), we get
(\ref{eq:energy_NL}). 
\end{proof}

Also, we will use below the following  Lemma. 

\begin{lemma}
For any smooth enough function in
$\overline{\Omega}$, we have, for any $\delta>0$ 
\begin{equation} \label{eq:estimate_Lr_trace}
   \int_\Gamma |\varphi|^r  \leq \delta \|\nabla
  (|\varphi|^{r/2})\|_{L^2(\Omega)}^2 +
  c(\Omega,\Gamma,\delta)\|\varphi\|_{L^r(\Omega)}^r 
\end{equation} 
\end{lemma}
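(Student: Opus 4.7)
The plan is to reduce the statement to a standard trace--interpolation inequality via the substitution $\psi = |\varphi|^{r/2}$. Observe that
\begin{displaymath}
\int_\Gamma |\varphi|^r = \int_\Gamma \psi^2 = \|\psi\|_{L^2(\Gamma)}^2,
\qquad
\|\psi\|_{L^2(\Omega)}^2 = \|\varphi\|_{L^r(\Omega)}^r,
\end{displaymath}
so (\ref{eq:estimate_Lr_trace}) becomes the claim that for every $\delta > 0$ and every smooth enough $\psi$,
\begin{displaymath}
\|\psi\|_{L^2(\Gamma)}^2 \leq \delta \|\nabla \psi\|_{L^2(\Omega)}^2 + c(\Omega,\Gamma,\delta)\|\psi\|_{L^2(\Omega)}^2.
\end{displaymath}
Smoothness of $\varphi$ (and, if $r<2$, a standard regularisation of $|\cdot|^{r/2}$) ensures that $\psi \in H^1(\Omega)$ with $\nabla \psi = (r/2)|\varphi|^{r/2 - 1}\,\mathrm{sign}(\varphi)\,\nabla \varphi$, so no subtlety is hidden in this reduction.

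To prove the reduced inequality I would use a controlled vector field argument. Since $\Omega$ has a smooth boundary, one can pick $V \in C^{1}(\overline{\Omega}; \R^N)$ with $V \cdot \vec{n} = 1$ on $\Gamma$; existence of such $V$ is a well-known consequence of the tubular neighborhood theorem (extend $\vec n$ from $\Gamma$ into $\Omega$ and multiply by a cutoff). Applying the divergence theorem to the field $\psi^2 V$ gives
\begin{displaymath}
\|\psi\|_{L^2(\Gamma)}^2 = \int_\Gamma \psi^2\, V\cdot\vec n = \int_\Omega \mathrm{div}(\psi^2 V) = \int_\Omega \bigl(2\psi\, V\cdot \nabla\psi + \psi^2\, \mathrm{div}\,V\bigr).
\end{displaymath}
Cauchy--Schwarz on the first term, followed by Young's inequality with parameter $\eta > 0$, produces
\begin{displaymath}
\|\psi\|_{L^2(\Gamma)}^2 \leq \eta \|\nabla \psi\|_{L^2(\Omega)}^2 + \frac{\|V\|_\infty^2}{\eta}\|\psi\|_{L^2(\Omega)}^2 + \|\mathrm{div}\,V\|_\infty \|\psi\|_{L^2(\Omega)}^2,
\end{displaymath}
and choosing $\eta = \delta$ yields the claim with $c(\Omega,\Gamma,\delta) = \|V\|_\infty^2/\delta + \|\mathrm{div}\,V\|_\infty$.

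There is essentially no obstacle here; the only point that requires a word is justifying that $\psi = |\varphi|^{r/2}$ admits a weak gradient in $L^2$ when $1 < r < 2$, but this is standard by approximating $|\cdot|^{r/2}$ by $(\cdot^2 + \varepsilon)^{r/4} - \varepsilon^{r/4}$ and passing to the limit. Alternatively, one could deduce the reduced inequality from the classical trace theorem $H^{1/2}(\Omega) \hookrightarrow L^2(\Gamma)$ combined with the interpolation inequality $\|\psi\|_{H^{1/2}}^2 \lesssim \|\psi\|_{L^2}\|\psi\|_{H^1}$ and a further Young's inequality, but the vector field approach above is more elementary and gives the explicit dependence of $c$ on $\delta$ as $\delta \downarrow 0$.
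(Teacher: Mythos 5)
Your proof is correct and follows essentially the same route as the paper: both reduce the lemma to the standard $\delta$-weighted trace inequality $\|\xi\|_{L^2(\Gamma)}^2 \leq \delta\|\nabla\xi\|_{L^2(\Omega)}^2 + C_\delta\|\xi\|_{L^2(\Omega)}^2$ by substituting $\xi = |\varphi|^{r/2}$. The only difference is that the paper simply quotes that inequality as known, whereas you additionally supply a proof of it via the divergence-theorem/vector-field argument, which is a fine (and more self-contained) addition.
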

\begin{proof}
We know that for any $\delta>0$, there exists $C_\delta>0$ such that
\begin{displaymath}
  \int_{\Gamma}|\xi|^2\,\mathrm{d}x \leq \delta 
  \int_{\Omega} |\nabla \xi|^2\,\mathrm{d}x
  + C_\delta\int_{\Omega}  |\xi|^2\,\mathrm{d}x
\end{displaymath}
for any $\xi \in H^1(\Omega)$.  Taking $\xi = |\varphi|^{r/2}$  we
obtain (\ref{eq:estimate_Lr_trace}). 
\end{proof}

\section{Existence in $L^r(\Omega)$, $1<r<r_0$}
\label{sec:local-exist-lr-supercrit}
\setcounter{equation}{0}

The goal of the section is to prove the existence of a solution of problem
(\ref{eq:pbFG:monotonic}), assumed (\ref{eq:mainHyp:fg}) and
(\ref{eq:nonlinearbalance_fg}),  starting at $u_0\in L^r(\Omega)$ for
$1<r<r_0$ with $r_{0}$ as in (\ref{eq:critical_space}),  so that 
the problem is supercritical in $L^r(\Omega)$. For this, we start by proving the
existence of solutions in $L^r(\Omega)$ for an approximated problem in which we
truncate the nonlinear term in the boundary in such a way that the resulting
problem is supercritical in the interior while subcritical on the
boundary. Since the supercritical nonlinear term has a good sign,
later, suitable uniform bounds for these solutions will allow us to 
construct a solution of (\ref{eq:pbFG:monotonic}).

\subsection{A supercritical truncated problem}

Notice that for $f$ satisfying (\ref{eq:fg:monotonic}) there exists $L>0$ such
that
\begin{equation}
  \label{eq:fg:monotonic}
  f'(s) \geq -L 
\qquad  \textrm{for all}\     s\in \R.
\end{equation}
In fact, it is enough to take $L = A_0$ with $A_0$ from (\ref{eq:mainHyp:fg}).

Also notice that we can construct functions $g_K$ satisfying the
following properties
\begin{enumerate}
\item $sg_K(s)$ is \emph{increasing} in $K$, i. e., for any $K_2 > K_1 >0$,
  \begin{equation} \label{eq:gKincreases_in_K}
    sg_{K_1}(s) \leq s g_{K_2}(s)\leq sg(s) \quad \mbox{for all $s\in
      \R$} 
  \end{equation}
 and
\item $g_K$ is $C^1(\R)$ and
  \begin{equation}
    \label{eq:cutGK}
-B_{0} \leq g_K'(s) \leq K \qquad \textrm{for all}\ s\in \R,
 \end{equation}
with $B_{0}$ as in (\ref{eq:mainHyp:fg}). 

\item $g_{K}$ coincides with $g$ in an interval $I_{K}= [a_{K}, b_{K}]$
with 
\begin{equation} \label{eq:increasing_interval}
  a_{K} \to -\infty, \quad  b_{K} \to \infty \quad \mbox{as $K \to \infty$}. 
\end{equation}
\end{enumerate}

\medskip

Given $K>0$   we consider
the following truncated problem  
\begin{equation}  \label{eq:truncated_problem}
  \left\{\begin{array}{rclcl}
    u_t  - \Delta u + f(u) & =
    & 0 & \mathrm{in} & \Omega\\
    \displaystyle\frac{\partial u}{\partial \vec{n}}&=& g_K(u) &
    \mathrm{on} & \Gamma\\
    u(0) &=& u_0
  \end{array}\right.
\end{equation}
with initial data in $L^{r}(\Omega)$. 
Notice that the reaction terms in (\ref{eq:truncated_problem}) satisfy
(\ref{eq:mainHyp:fg}) and (\ref{eq:nonlinearbalance_fg}) with $q=1$
and 
\begin{equation} 
\label{eq:linear_bounds_f_g}
sf(s)\geq -|f(0)||s|-L|s|^2, \quad    sg_K(s) \leq |g(0)||s| + K|s|^2   
 \qquad \textrm{for all}\ s\in \R 
\end{equation}

 The next results show that (\ref{eq:truncated_problem}) is globally well posed
 in $L^{r}(\Omega)$. Since (\ref{eq:truncated_problem}) is
 supercritical in $L^{r}(\Omega)$ because $1<r<r_{0}$, even
 local existence does not follow from previous results in Section
 \ref{sec:known-results-subcr}.

\begin{theorem} \label{thr:truncated_problem_well_posed}

Let $1 < r < r_{0}$ and  $u_{0} \in L^{r}(\Omega)$. Then there
exists a function  $v^{K}$, such that for every $T>0$, 
\begin{displaymath}
v^{K} \in
C([0,T];L^r(\Omega))\cap L^r((0,T);L^r(\Gamma)) , \quad    v^{K}(0) = u_{0},
\end{displaymath}  
and  $v^{K} \in C([\eps, T]\times \overline \Omega)$, for every
$\eps>0$,  with  
\begin{displaymath}
  |v^K(t,x)| \leq C(T,K) +    t^{-\frac{N}{2r}} C(T,K)
  \|u_{0}\|_{L^r(\Omega)} ,
  \qquad 0 <t \leq T  \quad  \textrm{for all  } x \in \overline{\Omega} ,
\end{displaymath}
for some constant $C(T,K)\geq 0$, which is a  global solution of
(\ref{eq:truncated_problem}) in the sense that for all $t\geq 0$
satisfies the  
variation of constants formula  
\begin{displaymath} 
    v^{K}(t) = S(t)u_0+ \int_0^t S(t-s)\left( -f_{\Omega}(v^{K}(s)) +
    (g_K)_{\Gamma}(v^{K}(s)) \right) \,\mathrm{d}s 
\end{displaymath}
where $S(t)$ denotes the semigroup generated by $\Delta$ with Neumann boundary
conditions in $L^r(\Omega)$.

Moreover, $v^K\in C^1((0,\infty); C^2(\overline\Omega))$ and is a
classical solution for $t>0$ of (\ref{eq:truncated_problem}). 

\end{theorem}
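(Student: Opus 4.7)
The plan is to approximate (\ref{eq:truncated_problem}) by introducing an additional truncation of the interior reaction $f$, solve the doubly truncated problem using the subcritical theory recalled in Section~\ref{sec:known-results-subcr}, and then pass to the limit via a priori $L^\infty$ estimates that exploit the good sign of the supercritical term $f$. Concretely, I would construct $C^1$ functions $f_n$ with $f_n=f$ on $[-n,n]$, extended linearly for $|s|>n$ by the tangent line at $\pm n$, so that $|f_n(s)|\le C_n(1+|s|)$, the one-sided bound $f_n'(s)\ge -L$ from (\ref{eq:fg:monotonic}) is preserved for all $s\in\R$, and, for $n$ large, $f_n(s)s\ge 0$ outside a bounded interval independent of $n$. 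Both $f_n$ and $g_K$ then satisfy the linear growth condition (\ref{eq:Lr:growth:subcrit}) with $p=q=1$, hence the problem $(f_n,g_K)$ is subcritical in every $L^r(\Omega)$, and by Section~\ref{sec:known-results-subcr} there exists a unique global $\bar\epsilon$-regular solution $v^{K,n}\in C([0,T];L^r(\Omega))$, classical for $t>0$, satisfying the variation of constants formula with $(f_n,g_K)$.

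Next I would obtain bounds on $v^{K,n}$ uniform in $n$. Multiplying the equation by $|v^{K,n}|^{\sigma-2}v^{K,n}$ and adapting the computation of Proposition~\ref{prop:norm:ineq:bounds}, the boundary contribution is controlled using the linear growth of $g_K$ together with the trace estimate (\ref{eq:estimate_Lr_trace}), while the interior contribution is controlled by the uniform lower bound $f_n(s)s\ge -D_1$. This yields a differential inequality of the form
\begin{displaymath}
\frac{1}{\sigma}\frac{d}{dt}\|v^{K,n}\|_{L^\sigma(\Omega)}^\sigma + \frac{\sigma-1}{\sigma^2}\int_\Omega\bigl|\nabla |v^{K,n}|^{\sigma/2}\bigr|^2 \le C(\sigma,K)\bigl(1+\|v^{K,n}\|_{L^\sigma(\Omega)}^\sigma\bigr),
\end{displaymath}
whose Gronwall integration gives $L^\sigma$ bounds on $[0,T]$ uniform in $n$. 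A Moser iteration along $\sigma_k=r\cdot 2^k$, combined with the standard ultracontractive smoothing of the Neumann heat semigroup, promotes this to the pointwise estimate $|v^{K,n}(t,x)|\le C(T,K)+t^{-N/(2r)}C(T,K)\|u_0\|_{L^r(\Omega)}$, uniformly in $n$, for $0<t\le T$ and $x\in\overline\Omega$.

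With uniform $L^\infty$ bounds in hand, for $n$ exceeding $\|v^{K,n}\|_\infty$ we have $f_n(v^{K,n})=f(v^{K,n})$, so on bounded time intervals $v^{K,n}$ actually solves (\ref{eq:truncated_problem}). Standard interior and boundary parabolic regularity combined with Arzel\`a--Ascoli yield a subsequence converging in $C([\eps,T]\times\overline\Omega)$ for every $\eps>0$; the variation of constants formula and dominated convergence then give convergence in $C([0,T];L^r(\Omega))$, and the trace estimate (\ref{eq:estimate_Lr_trace}) places $v^K$ in $L^r((0,T);L^r(\Gamma))$. Classical $C^1((0,\infty);C^2(\overline\Omega))$ regularity follows by a standard bootstrap using Schauder theory for parabolic problems with Neumann data, starting from the $L^\infty$ bound. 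The main obstacle is the uniform-in-$n$ Moser estimate: one must carry out the iteration carefully so the smoothing exponent lands exactly on $t^{-N/(2r)}$, and so that the dependence on $K$ (but not on $n$) is tracked cleanly through every step.
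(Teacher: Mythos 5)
Your scheme (truncate $f$ to $f_n$ of linear growth, keep $u_0$ fixed, solve the doubly subcritical problem, and pass to the limit in $n$) is genuinely different from the paper's, which keeps $f$ intact and instead approximates the \emph{initial data} by $u_0^n\in L^\sigma(\Omega)$, $\sigma>r_0$, so that $(f,g_K)$ is already subcritical in $L^\sigma(\Omega)$. That choice is not cosmetic: it is what makes the hardest part of the theorem work, namely that $v^K\in C([0,T];L^r(\Omega))$ with $v^K(0)=u_0$. In the paper, the difference $u_n^K-u_m^K$ of two approximations solves an equation with the \emph{same} nonlinearity $f$, so the interior term in the $L^r$ energy estimate is annihilated by the one-sided bound $f'\ge -L$ alone, the boundary term is absorbed via $g_K'\le K$ and the trace inequality (\ref{eq:estimate_Lr_trace}), and Gronwall gives a Cauchy estimate in $C([0,T];L^r(\Omega))$ \emph{uniformly down to $t=0$}; continuity at $t=0$ of the limit is then immediate. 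In your scheme the nonlinearity changes with $n$, and the analogous difference estimate picks up the extra term $\int_\Omega\bigl(f_n(v^{K,m})-f_m(v^{K,m})\bigr)|w|^{r-2}w$, supported where $|v^{K,m}|>\min(n,m)$, i.e.\ precisely near $t=0$ where only the $L^r$ bound and the smoothing estimates $\|v^{K,m}(t)\|_{L^{p\rho}}\lesssim t^{-1/(p-1)}$ are available; since $p$ is supercritical for $L^r(\Omega)$, $\|f(v^{K,m}(t))\|_{L^\rho}\sim t^{-p/(p-1)}$ is not integrable at $t=0$ and the Gronwall argument does not close. Your stated justification for convergence in $C([0,T];L^r(\Omega))$ --- ``the variation of constants formula and dominated convergence'' --- does not address this: uniform convergence on $[\eps,T]$ for every $\eps>0$ plus continuity of each $v^{K,n}$ at $t=0$ does not give continuity of the limit at $t=0$ without equicontinuity there, and equicontinuity is exactly what is missing. (That this is a real obstruction and not a technicality is reflected in the paper's Theorem \ref{thr:continuity_at_t=0}, where continuity at $t=0$ in the presence of the untruncated supercritical $f$ is only recovered in $L^\alpha_{loc}(\Omega)$, $\alpha<r$, by a localization argument.)

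Two smaller points. First, your Gronwall inequality at level $\sigma$ cannot give bounds on all of $[0,T]$ for $\sigma>r$, since $u_0\notin L^\sigma(\Omega)$ in general; you must either run the iteration from the $L^r$ level using the gradient term (which is what Moser iteration does, and then the bound necessarily degenerates like a negative power of $t$), or retain the dissipative term $A\|v\|_{L^{\sigma+p-1}}^{\sigma+p-1}$ as in Proposition \ref{prop:bounds_vK_Lsigma} and compare with the singular ODE solution. Second, the Moser iteration with the boundary term $K\int_\Gamma|v|^{\sigma}$ is feasible but laborious; the paper obtains the same pointwise bound $|v^K(t,x)|\le C(T,K)+t^{-N/(2r)}C(T,K)\|u_0\|_{L^r(\Omega)}$ in one stroke by comparing with the explicit linear Robin problem $U_t-\Delta U=LU+|f(0)|$, $\partial U/\partial\vec n=KU+|g(0)|$, using (\ref{eq:linear_bounds_f_g}); you could adopt the same comparison for your $(f_n,g_K)$ problem, since $f_n(s)s\ge -|f(0)||s|-L|s|^2$ holds uniformly in $n$. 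But even with these repairs, the continuity at $t=0$ remains the genuine gap, and the cleanest fix is to approximate the data rather than the interior nonlinearity.
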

\begin{proof}{}
We proceed in several steps. 
  
\medskip 
\noindent {\bf Step 1. Approximate the initial data} 

Let $u_0^n \in L^\sigma(\Omega)$ for some $\sigma> r_0$, with $r_0$ as in
(\ref{eq:critical_space}), such that $u_0^n \to u_0$ in $L^r(\Omega)$ as
$n\to\infty$ and consider the solutions of
\begin{equation}
  \label{eq:pbFG:monotonic:appr}
  \left\{\begin{array}{rclcl}
    u_t  - \Delta u + f(u) & =
    & 0 & \mathrm{in} & \Omega\\
    \displaystyle\frac{\partial u}{\partial \vec{n}}&=& g_K(u) &
    \mathrm{on} & \Gamma\\
    u(0) &=& u^n_0
  \end{array}\right.
\end{equation}
as in Section \ref{sec:known-results-subcr}, which we will denote by $u_n^K(t)$.
Since $f, g_{K}$  satisfy
(\ref{eq:mainHyp:fg}) and (\ref{eq:nonlinearbalance_fg}) with $q=1$,
by part i) in  Theorem \ref{th:attractor_subcritical}, these solutions
are global and  classical  for $t>0$.

Denote $v^K_{n,m}(t) = u_n^K(t) - u_m^K(t)$. Subtracting equations
for $u_n^K$ and $u_m^K$ and multiplying by
$|v^K_{n,m}(t)|^{r-2}v^K_{n,m}(t)$ we have
\begin{eqnarray*}
  \frac{1}{r}\frac{\mathrm{d}}{\mathrm{d}t} \|v^K_{n,m}(t)\|^r_{L^r} +
 c \|\nabla |v^K_{n,m}(t)|^{r/2}\|^2_{L^2(\Omega)} 
  &+& \int_{\Omega}
  \left[f(u^K_n(t)) - f(u^K_m(t))\right] |v^K_{n,m}(t)|^{r-2}v^K_{n,m}(t) \\
  &\leq &  \int_\Gamma \left[g_K(u^K_n(t)) - g_K(u^K_m(t))\right]
  |v^K_{n,m}(t)|^{r-2}v^K_{n,m}(t).
\end{eqnarray*}
Now, observe that from (\ref{eq:fg:monotonic}), (\ref{eq:cutGK})
\begin{displaymath}
   \int_{\Omega}
  \left[f(u^K_n) - f(u^K_m)\right]
  |v^K_{n,m}|^{r-2}v^K_{n,m}\geq -L \|v^K_{n,m}\|^r_{L^r(\Omega)}
\end{displaymath}
and
\begin{displaymath}
  \int_\Gamma \left[g_K(u^K_n) - g_K(u^K_m)\right]
  |v^K_{n,m}|^{r-2}v^K_{n,m} \leq K \int_\Gamma |v^K_{n,m}|^r.
\end{displaymath}

Hence, from (\ref{eq:estimate_Lr_trace}),   for any $\delta>0$ 
\begin{displaymath}
   \int_\Gamma \left[g_K(u^K_n) - g_K(u^K_m)\right]
  |v^K_{n,m}|^{r-2}v^K_{n,m} \leq \delta \|\nabla
  (|v_{n,m}^K|^{r/2})\|_{L^2(\Omega)}^2 +
  c(K,\delta)\|v_{n,m}^K\|_{L^r(\Omega)}^r . 
\end{displaymath}
Thus, with a suitable choice of $\delta$   we have
\begin{equation} \label{eq:Lipschitz_truncated_problem}
\frac{\mathrm{d}}{\mathrm{d}t} \|v^K_{n,m}(t)\|^r_{L^r(\Omega)} +
   c_1 \|\nabla (|v^K_{n,m}(t)|^{r/2})\|^2_{L^2(\Omega)}
   \leq C(K)   \|v^K_{n,m}(t)\|^r_{L^r(\Omega)}.
\end{equation}

Given $T>0$, by Gronwall's Lemma, from
(\ref{eq:Lipschitz_truncated_problem}) we have that for any $0\leq
t\leq T$, 
\begin{displaymath}
  \|v^K_{n,m}(t)\|^r_{L^r(\Omega)} + c_1\int_0^t \|\nabla
  (|v^K_{n,m}(s)|^{r/2})\|^2_{L^2(\Omega)}\,\textrm{d}s
\leq  C(K,T)
  \|v^K_{n,m}(0)\|^r_{L^r(\Omega)} \to 0 \quad \textrm{as} \quad n,m\to\infty
\end{displaymath}
and so, $u_n^K$ is a Cauchy sequence in $C([0,T];L^r(\Omega))$. Also
using  (\ref{eq:estimate_Lr_trace}) 
\begin{eqnarray*}
  \int_{0}^{T} \int_\Gamma |v^K_{n,m}(t)|^r  \mathrm{d}t &\leq&  \int_{0}^{T} \|\nabla
  (|v^K_{n,m}(t)|^{r/2})\|_{L^2(\Omega)}^2  \mathrm{d}t +
  c(\Omega,\Gamma) \int_{0}^{T} \|v^K_{n,m}(t)\|_{L^r(\Omega)}^r   \mathrm{d}t
  \\ & \leq&
  C(K,T)   \|v^K_{n,m}(0)\|^r_{L^r(\Omega)}  \to 0 \quad \textrm{as}
  \quad n,m\to\infty 
\end{eqnarray*} 
and then $u_n^K$ is also a  Cauchy sequence in
$L^{r}((0,T);L^r(\Gamma))$. 

Hence, there exists $v^K\in C([0,T];L^r(\Omega))\cap
L^r((0,T);L^r(\Gamma))$ such that
\begin{equation} \label{eq:convergence_2_vK_LrOmega} 
  \sup_{t\in [0,T]} \|u_n^K(t) - v^K(t)\|_{L^r(\Omega)} \to 0 \quad
  \textrm{as} \quad n\to\infty
\end{equation}
\begin{equation} \label{eq:convergence_2_vK_LrGamma} \int_{0}^{T} \int_{\Gamma}
  |u_n^K(t,x) - v^K(t,x)|^r \, \mathrm{d}x\mathrm{d}t \to 0 \quad \textrm{as}
  \quad n\to\infty .
\end{equation} 
In particular, $u_n^K(t,x) \to v^K(t,x)$ as
$n\to\infty$ a.e. for  $(t,x)\in[0,T]\times\Gamma$. 

Also it is easy to see that $v^{K}$ does not depend on the sequence of
initial data, but only on $u_{0} \in L^{r}(\Omega)$.

\medskip 
\noindent {\bf Step 2. $L^\infty$-bound for the approaching sequence} 

Let us  show now that the sequence $u_n^K(t)$ is uniformly bounded in 
$L^\infty(\Omega)$ with respect to $n$, for $0<\eps\leq t\leq T$. 

For this, we will use the auxiliary problem
\begin{equation}
  \label{eq:pb:lin:bound}
   \left\{\begin{array}{rclcl}
    U_t  - \Delta U  & =
    & LU + A & \mathrm{in} & \Omega\\
    \displaystyle\frac{\partial U}{\partial \vec{n}}&=& K U +D &
    \mathrm{on} & \Gamma\\
    U(0) & &  \textrm{given in   }L^{r}(\Omega) 
  \end{array}\right.
\end{equation}
with $A= |f(0)|$ and $D= |g(0)|$. Denote by $U^{n}(t,x)$ the solution
of (\ref{eq:pb:lin:bound}) with initial data $|u_{0}^{n}|$ and by $U(t,x)$ the solution
of (\ref{eq:pb:lin:bound}) with initial data $|u_{0}|$. 

Now, using the variation of constants formula in
(\ref{eq:pb:lin:bound}) 
\begin{displaymath}
  U^n(t) = \Phi(t) + U^n_h(t), \quad  U(t) = \Phi(t) + U_h(t)
\end{displaymath}
where $U^n_h(t), U_h(t)$ are  the solutions of the homogeneous problem resulting from
taking $A=D=0$ in (\ref{eq:pb:lin:bound}) and initial data
$|u_{0}^{n}|$ and $|u_{0}|$ respectively, i.e, the solution of
\begin{displaymath}
   \left\{\begin{array}{rclcl}
    U_t  - \Delta U  & =
    & LU & \mathrm{in} & \Omega\\
    \displaystyle\frac{\partial U}{\partial \vec{n}}&=& K U &
    \mathrm{on} & \Gamma\\
    U(0) & =&  |u_0^n| \quad (\mathrm{or}\ |u_0|)
  \end{array}\right.
\end{displaymath}
 and $\Phi(t)$ is the unique solution of
problem (\ref{eq:pb:lin:bound}) with $U(0)=0$ (which does not depend on
$u_{0}^{n}$ or $u_{0}$), i.e,
\begin{displaymath}
   \left\{\begin{array}{rclcl}
    U_t  - \Delta U  & =
    & LU + A & \mathrm{in} & \Omega\\
    \displaystyle\frac{\partial U}{\partial \vec{n}}&=& K U +D &
    \mathrm{on} & \Gamma\\
    U(0) & =&  0.
  \end{array}\right.
\end{displaymath}

Notice that the homogeneous problem above is a linear heat equation
with Robin boundary conditions. Therefore,
standard regularity theory implies that $U^n_h $, $U_h$ and $\Phi$
are classical for $t>0$,
\begin{displaymath}
  \|U^n(t)\|_{L^\infty(\Omega)} \leq C(T,K) +  
  t^{-\frac{N}{2r}} C(T,K) \|u_{0}^{n}\|_{L^r(\Omega)}, \qquad 0 <t
  \leq T   ,
\end{displaymath}
and 
\begin{displaymath}
  U^{n}(t,x) \to U(t,x) \quad \textrm{in} \  C^{k}([\eps,T]\times
  \Omega) \cap C([0,T]; L^{r}(\Omega))  
\end{displaymath}
for any $\eps >0$, $k\in \N$, since $u_0^n \to u_0$ in $L^r(\Omega)$ as
$n\to\infty$.

Also, $U^n_h (t)\geq 0$, $U_h(t)\geq 0$ and $\Phi(t)\geq 0$
for all $t\geq 0$ since $A,D \geq 0$ and have nonnegative initial data.

Observe now that $U^{n}(t,x)$ 
is  a supersolution  of problem
(\ref{eq:pbFG:monotonic:appr}) since $f,g_K$
satisfy (\ref{eq:linear_bounds_f_g}).
Hence 
\begin{displaymath} 
  |u_n^K(t,x)| \leq  U^{n}(t,x) \leq C(T,K) +    t^{-\frac{N}{2r}}
  C(T,K) \|u_{0}^{n}\|_{L^r(\Omega)}, \qquad 0 <t \leq T 
\quad \textrm{a.e.\ in  }\Omega, 
\end{displaymath}
and so $\|u^K_n(t)\|_{L^\infty(\Omega)}\leq
C(\eps,T,K,\|u_{0}^{n}\|_{L^r(\Omega)})$ for all $n\geq
1$ and $\eps\leq t \leq T$. Also since $u_{n}^K$ is a classical
solution and $U^{n}$ is also smooth we have that, up to the boundary, 
\begin{equation} \label{eq:bound_Linfty_u_n}
  |u_n^K(t,x)| \leq U^{n}(t,x) \leq  C(T,K) +    t^{-\frac{N}{2r}}
  C(T,K) \|u_{0}^{n}\|_{L^r(\Omega)}, \qquad 0 <t \leq T  
\quad \textrm{for all  } x \in \overline{\Omega}.  
\end{equation}

Now, since $u_0^n \to u_0$ in $L^r(\Omega)$ as $n\to\infty$ and using  the
convergence $U^{n}(t,x) \to U(t,x)$ and  $u_n^K(t,x) \to  v^K(t,x)$ obtained above,
(\ref{eq:convergence_2_vK_LrOmega}),
(\ref{eq:convergence_2_vK_LrGamma}),  we get  
\begin{equation}
  \label{eq:bound:vK:Linfty}
  |v^K(t,x)| \leq U(t,x) \leq C(T,K) +    t^{-\frac{N}{2r}} C(T,K)
  \|u_{0}\|_{L^r(\Omega)} ,
  \qquad 0 <t \leq T  \quad  \textrm{for all  } x \in \overline{\Omega}. 
\end{equation}
Notice that estimates (\ref{eq:bound_Linfty_u_n}) and (\ref{eq:bound:vK:Linfty})
are valid up to the boundary.

Finally, observe that the bounds above  and
(\ref{eq:convergence_2_vK_LrOmega}), (\ref{eq:convergence_2_vK_LrGamma}) imply
that for any $\eps >0$ and any $r\leq s < \infty$,
\begin{equation} \label{eq:convergence_2_vK_LqOmega} 
  \sup_{t\in [\eps,T]} \|u_n^K(t) - v^K(t)\|_{L^s(\Omega)} \to 0 \quad
  \textrm{as} \quad n\to\infty
\end{equation}
\begin{equation} \label{eq:convergence_2_vK_LqGamma}  
  \int_{\eps}^{T} \int_{\Gamma} |u_n^K(t,x) - v^K(t,x)|^s \,
  \mathrm{d}x\mathrm{d}t \to 0 \quad
  \textrm{as} \quad n\to\infty . 
\end{equation}

\medskip 
\noindent {\bf Step 3. The limit is a solution of (\ref{eq:truncated_problem})}

First, assume $0<\eps<t<T$. Taking now $\phi\in H^{2}_{r^\prime}(\Omega)$, with 
$\frac{\partial\phi}{\partial\vec{n}}=0$ on $\Gamma$, where $r^\prime$ is the conjugate
of $r$, i. e., $\frac{1}{r}+ \frac{1}{r^\prime}=1$, we have from
(\ref{eq:pbFG:monotonic:appr}) 
\begin{displaymath}
   \frac{\mathrm{d}}{\mathrm{d}t}  \int_\Omega u_n^K \phi +
   \int_\Omega u_n^K (-\Delta \phi) +\int_\Omega  f(u^K_n) \phi 
=\int_\Gamma g_K(u_n^K)\phi .  
\end{displaymath}

Now, using  the uniform bounds in (\ref{eq:bound_Linfty_u_n}),
(\ref{eq:bound:vK:Linfty}) and the convergence in
(\ref{eq:convergence_2_vK_LqOmega}),
(\ref{eq:convergence_2_vK_LqGamma}), and the growth of $f$, we have for any
$r\leq s<\infty$,
\begin{displaymath}
   f(u^K_n) \to  f(v^K) \quad \textrm{in} \quad L^{\infty}((\eps,T);
   L^{s}(\Omega)) 
\end{displaymath}
and 
\begin{displaymath}
   g_{K}(u^K_n) \to  g_{K}(v^K) \quad \textrm{in} \quad L^{s}((\eps,T);
   L^{s}(\Gamma)) . 
\end{displaymath}

Hence,  letting  $n\to\infty$, we get 
\begin{displaymath}
  \frac{\mathrm{d}}{\mathrm{d}t}  \int_\Omega v^K \phi +
   \int_\Omega v^K (-\Delta \phi) +\int_\Omega  f(v^K) \phi 
=\int_\Gamma g_K(v^K)\phi 
\end{displaymath}
for $\epsilon \leq t \leq T$.

Then, notice that this is enough to guarantee that $v^K$ satisfies (see \cite{Ball1977})
\begin{displaymath}
  v^K(t) = S(t-\eps)v^K(\eps) + \int_\eps^t S(t-s)h(s)\,\textrm{d}s
\end{displaymath}
where $S(t)$ denotes the strongly continuous analytic semigroup generated by $\Delta$ in
$L^r(\Omega)$ with homogeneous Neumann boundary conditions, and
\begin{displaymath}
h =
-f_{\Omega}(v^K) + (g_K)_{\Gamma}(v^K)\in L^1([\eps,T];
L^r(\Omega)) +L^1([\eps,T]; L^r(\Gamma)).
\end{displaymath}

Finally, taking $\eps \to 0$ and using the continuity of the linear semigroup $S(t)$
and $v^K(\eps)\to v^K(0)=u_0$ in $L^{r}(\Omega)$ as $\eps\to 0$, we have that
$S(t-\epsilon) v^K(\eps) \to S(t)u_0$ in $L^{r}(\Omega)$ as $\epsilon \to 0$ 
and then,
\begin{displaymath}
  v^K(t) = S(t)u_0+ \int_0^t S(t-s)h(s)\,\textrm{d}s.
\end{displaymath} 
Hence, $v^K$ is a global  solution of  (\ref{eq:truncated_problem}) in
$L^r(\Omega)$ in the sense of the variations of constants formula.

\medskip 
\noindent {\bf Step 4.  Further regularity} 

From (\ref{eq:bound:vK:Linfty}), for any
$1<s\leq \infty$ and $\eps >0$,  we have that $v^{K}(\eps)\in
L^s(\Omega)$. Taking $s > r_{0}$ the problem (\ref{eq:truncated_problem}) is
subcritical in $L^s(\Omega)$. So, by part i) in  Theorem
\ref{th:attractor_subcritical},  the unique  solution of this problem starting at
$v^K(\eps)$, which is $v^K(t+\epsilon)$, is classical for $t>0$. Thus,
for $t> \eps$, $v^{K}(t)$ coincides with  
the solutions in Section \ref{sec:known-results-subcr}.

In particular,  $v^{K} \in C([\eps, T]\times \overline \Omega)$, for
every $\eps>0$, and  $v^K$ is a classical solution for $t>0$ and $v^K\in
C((0,T);C^2(\overline\Omega))$, for any $T>0$. 
\end{proof}

\begin{remark}
  If $f(0)=0=g(0)$ then $\Phi(t)=0$ and we can take $C(T,K)=0$ in
  (\ref{eq:bound:vK:Linfty}).
\end{remark}

Now we turn into uniqueness of solutions of (\ref{eq:truncated_problem}). 

\begin{theorem}
\label{th:uniqueness:truncate}
Let $1<r<r_{0}$ and $T>0$ fixed. Given $u_{0} \in L^{r}(\Omega)$,
there exists a unique function
\begin{displaymath}
v \in C([0,T];L^r(\Omega))\cap C([\eps, T]\times \overline \Omega))
, \quad    v(0) = u_{0} 
\end{displaymath}  
for any $\epsilon>0$,
 satisfying, for every $0<\eps\leq t \leq  T$, 
\begin{equation} \label{eq:VCF_truncated_problem}
    v(t) = S(t-\epsilon)v(\epsilon)+ \int_\epsilon^t S(t-s)\big( -f_{\Omega}(v(s)) +
    (g_K)_{\Gamma}(v(s)) \big) \,\mathrm{d}s, 
\end{equation}
where $S(t)$ denotes the semigroup generated by $\Delta$ with Neumann boundary
conditions in $L^r(\Omega)$.

In particular, the function $v^{K}(\cdot)$ constructed in Theorem
\ref{thr:truncated_problem_well_posed} is the unique 
function satisfying these conditions.

\end{theorem}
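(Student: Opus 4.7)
The plan is to mirror Step 1 of the proof of Theorem \ref{thr:truncated_problem_well_posed}: run an $L^r$ energy estimate for the difference of two candidate solutions and pass to the limit as the base point goes to zero, exploiting continuity at $t=0$. Let $v_1,v_2$ be two functions satisfying the hypotheses of the theorem with common initial datum $u_0$, and fix any $\epsilon>0$. Since $v_i\in C([\epsilon,T]\times\overline{\Omega})$, the Nemitsky compositions $f_\Omega(v_i)$ and $(g_K)_\Gamma(v_i)$ are bounded on their respective cylinders. Standard parabolic smoothing applied to the VCF (\ref{eq:VCF_truncated_problem}) then shows that each $v_i$ is classical on $(\epsilon,T]\times\overline{\Omega}$, and since $\epsilon>0$ is arbitrary, this regularity holds throughout $(0,T]$.

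On $(\epsilon,T)$ I subtract the two classical equations, multiply by $|v_1-v_2|^{r-2}(v_1-v_2)$, and integrate by parts over $\Omega$. The one-sided bound $f'(s)\geq -L$ from (\ref{eq:fg:monotonic}) yields
\begin{displaymath}
\int_\Omega \bigl[f(v_1)-f(v_2)\bigr]\,|v_1-v_2|^{r-2}(v_1-v_2) \geq -L\,\|v_1-v_2\|^{r}_{L^r(\Omega)},
\end{displaymath}
while $g_K'(s)\leq K$ from (\ref{eq:cutGK}) controls the boundary term by $K\int_\Gamma|v_1-v_2|^r$, which is then absorbed via the trace inequality (\ref{eq:estimate_Lr_trace}) into a small multiple of $\int_\Omega|\nabla(|v_1-v_2|^{r/2})|^2$ plus a multiple of $\|v_1-v_2\|^{r}_{L^r(\Omega)}$. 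The outcome is exactly the differential inequality (\ref{eq:Lipschitz_truncated_problem}) applied to $w:=v_1-v_2$, and Gronwall's inequality gives
\begin{displaymath}
\|v_1(t)-v_2(t)\|^{r}_{L^r(\Omega)} \leq e^{C(K)(t-\epsilon)}\,\|v_1(\epsilon)-v_2(\epsilon)\|^{r}_{L^r(\Omega)}, \qquad \epsilon\leq t\leq T.
\end{displaymath}

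Since both $v_i$ lie in $C([0,T];L^r(\Omega))$ with $v_1(0)=v_2(0)=u_0$, sending $\epsilon\to 0^{+}$ forces the right-hand side to zero, so $v_1(t)=v_2(t)$ in $L^r(\Omega)$ for every $t\in(0,T]$; equality at $t=0$ is the prescribed initial condition. The last assertion of the theorem is an immediate consequence: the function $v^K$ constructed in Theorem \ref{thr:truncated_problem_well_posed} satisfies all the hypotheses of the uniqueness statement, hence it must coincide with the unique $v$ furnished here. The only delicate point is promoting the hypothesis $v_i\in C([\epsilon,T]\times\overline{\Omega})$ to $C^{1,2}$ smoothness sufficient to justify the integration by parts; this follows routinely from parabolic regularity for the Neumann heat equation with continuous, bounded interior inhomogeneity and boundary datum, so the energy computation carries over unchanged from the proof of Theorem \ref{thr:truncated_problem_well_posed}.
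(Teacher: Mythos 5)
Your proposal is correct and follows essentially the same route as the paper: upgrade any candidate satisfying the hypotheses to a classical solution on $[\epsilon,T]$ (the paper does this by noting $v(\epsilon)\in C(\overline\Omega)$ places the problem in the subcritical setting of Section \ref{sec:known-results-subcr}), run the $L^r$ energy estimate leading to (\ref{eq:Lipschitz_truncated_problem}) for the difference, apply Gronwall, and let $\epsilon\to 0^+$ using continuity at $t=0$. The only cosmetic difference is that you compare two arbitrary candidates while the paper compares an arbitrary candidate with the constructed $v^K$; the argument is identical.
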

\begin{proof}
Let $v^K$ be the function  constructed in Theorem
\ref{thr:truncated_problem_well_posed} and let  $v$ be a function
such that $v\in C([0,T];L^r(\Omega))$, $v(0) = 
u_0$, and for any $\epsilon>0$, $v\in C([\epsilon,T];C(\overline\Omega))$ and
satisfies (\ref{eq:VCF_truncated_problem}).

So, fixed $\epsilon>0$,  $v(\epsilon)\in C(\overline{\Omega})$ and
then from (\ref{eq:VCF_truncated_problem}) and the results in Section
\ref{sec:known-results-subcr}, $v$   satisfies the equation
(\ref{eq:truncated_problem}) in $[\epsilon,T]$,  with initial data
$v(\epsilon)$, in a classical sense. In particular $v$ is smooth  for $t>0$.

Since both $v$ and $v^{K}$ satisfy (\ref{eq:truncated_problem}),
arguing as in (\ref{eq:Lipschitz_truncated_problem}) we get, for
$0<\eps\leq t \leq T$, 
\begin{displaymath}
  \frac{\mathrm{d}}{\mathrm{d}t} \|v^K(t)-v(t)\|^r_{L^r(\Omega)} +
  c_1 \|\nabla (|v^K(t)-v(t)|^{r/2})\|^2_{L^2(\Omega)}
  \leq C(K)   \|v^K(t)-v(t)\|^r_{L^r(\Omega)} .   
\end{displaymath}
Then, by Gronwall's Lemma, we have
\begin{displaymath}
  \|v^K(t)-v(t)\|_{L^r(\Omega)}^r 
  \leq C(K,T) \|v^K(\epsilon)-v(\epsilon)\|_{L^r(\Omega)}^r, 
  \quad  \quad 0<\eps\leq t \leq T.
\end{displaymath}
Since $v,v^K\in C([0,T];L^r(\Omega))$ and $v(0) = v^K(0) = u_0$, taking limits
as $\epsilon \to 0$ we have $v \equiv v^K$ on $[0,T]$.
\end{proof}

\begin{remark} \label{rem:continuity_at0_VCF}
  Since the function $v$ in  the theorem is continuous at $0$ in
  $L^r(\Omega)$,  we   can take $\eps \to  0$ in
  (\ref{eq:VCF_truncated_problem}), to obtain that  $v$ satisfies
  $v(0) = u_0$ and  
  \begin{displaymath}
     v(t) = S(t)u_0+ \int_0^t S(t-s)\big( -f_{\Omega}(v(s)) +
    (g_K)_{\Gamma}(v(s))\big) \,\textrm{d}s.
  \end{displaymath}

Conversely, if we know that the integral above makes sense, then a
simple algebraic manipulation implies that
(\ref{eq:VCF_truncated_problem}) holds true for any $\eps >0$. 

\end{remark}

\subsection{Uniform bounds in $K$ for the truncated problem}
\label{sec:unif-bounds-Lsigma}

In order to construct a solution of problem (\ref{eq:pbFG:monotonic}),
we are going to obtain uniform bounds in $K$ for the solutions of
(\ref{eq:truncated_problem}) constructed in Theorem
\ref{thr:truncated_problem_well_posed}, for 
positive times bounded away from zero. We fix $u_0\in L^r(\Omega)$ and
consider the family of solutions $v^K(\cdot;u_0)$ of
(\ref{eq:truncated_problem}) obtained in Theorem
\ref{thr:truncated_problem_well_posed}. Then we have the following.

\begin{proposition} \label{prop:bounds_vK_Lsigma}
  For any $1<r< r_{0}$ and $u_0\in L^r(\Omega)$, the solutions $v^K(\cdot;u_0)$
  of (\ref{eq:truncated_problem}) obtained in Theorem
  \ref{thr:truncated_problem_well_posed} satisfy
\begin{equation} \label{eq:estimate_vK_Lr}
\|v^K(t)\|_{L^r (\Omega)}  \leq \max\{
\|u_{0}\|_{L^r (\Omega)} ,   (\beta_{r}/\gamma_{r})^\frac{1}{r + p
  -1}\}, \qquad t\geq 0 , 
\end{equation}   
and for any $\sigma>r$, 
\begin{equation} 
\label{eq:estimate_vK_Lsigma} 
\|v^K(t)\|_{L^\sigma (\Omega)}
\leq \left({\beta_{\sigma} \over \gamma_{\sigma}}\right)^{1
     \over \sigma + p-1} + \left({\sigma 
    \over \gamma_{\sigma} (p-1) }\right)^{1 \over p-1} 
      t^{-\frac{1}{ p-1}} , \qquad t>0
\end{equation}
for some $\beta_{\sigma}, \gamma_{\sigma}>0$ depending on $\sigma\geq r$ but not in $K$ or $u_0$. 

Finally, 
\begin{displaymath}
   \int_0^T \int_\Gamma |v^K|^r \leq  C T
\end{displaymath}
and for every $0<\eps < T$ and  $\sigma >r$
\begin{equation} \label{eq:estimate_Lsigma_trace_vK}
   \int_\eps^T \int_\Gamma |v^K|^\sigma \leq
  C   T + \|v^K(\eps)\|_{L^{\sigma}(\Omega)}^{\sigma},
\end{equation} 
for some constants depending on $\sigma\geq r$ but not in $K$ or
$u_0$.

\end{proposition}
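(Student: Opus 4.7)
The plan is to establish the analog of the energy estimate (\ref{eq:energy_NL}) of Proposition \ref{prop:norm:ineq:bounds} for the truncated problem, with constants uniform in $K$, and then to extract (\ref{eq:estimate_vK_Lr})--(\ref{eq:estimate_vK_Lsigma}) from a scalar ODE comparison. All computations will be carried out at the level of the smooth approximating sequence $u_n^K$ from Step 1 of the proof of Theorem \ref{thr:truncated_problem_well_posed}, and the bounds for $v^K$ will follow by passing to the limit.

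Multiplying the equation for $u_n^K$ by $|u_n^K|^{\sigma-2}u_n^K$ and integrating by parts produces the usual identity. The only change from the untruncated case is the boundary integral $\int_\Gamma g_K(u_n^K)|u_n^K|^{\sigma-2}u_n^K$. The key observation is that property (\ref{eq:gKincreases_in_K}), $s g_K(s) \leq s g(s)$, combined with $|u_n^K|^{\sigma-2}\geq 0$, yields
\[
\int_\Gamma g_K(u_n^K)|u_n^K|^{\sigma-2}u_n^K \,\leq\, \int_\Gamma g(u_n^K)|u_n^K|^{\sigma-2}u_n^K.
\]
Substituting this produces exactly the inequality that starts the proof of Proposition \ref{prop:norm:ineq:bounds}, so the rest of that proof (Poincar\'e, Young, and the pointwise lower bound on $H_\sigma$) goes through verbatim and delivers
\[
\frac{1}{\sigma}\frac{d}{dt}\|u_n^K\|_{L^\sigma(\Omega)}^\sigma + \frac{2(\sigma-1)}{\sigma^2}\int_\Omega |\nabla|u_n^K|^{\sigma/2}|^2 + A\int_\Omega |u_n^K|^{\sigma+p-1} \leq B,
\]
with $A$ and $B$ depending on $f$, $g$ and $\sigma$ but not on $K$, $n$ or $u_0$. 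Crucially, $g_K'$ never appears, which is the reason the bound is uniform in $K$.

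By H\"older's inequality $\int_\Omega |u|^{\sigma+p-1}\geq|\Omega|^{-(p-1)/\sigma}\|u\|_{L^\sigma(\Omega)}^{\sigma+p-1}$, so setting $y(t)=\|u_n^K(t)\|_{L^\sigma(\Omega)}^\sigma$ the previous estimate becomes
\[
y'(t) + \gamma_\sigma\, y(t)^{(\sigma+p-1)/\sigma} \leq \beta_\sigma,
\]
with $\gamma_\sigma,\beta_\sigma>0$ independent of $K$, $n$ and $u_0$. Estimate (\ref{eq:estimate_vK_Lr}) with $\sigma=r$ is then immediate from the absorbing character of this ODE, while (\ref{eq:estimate_vK_Lsigma}) with $\sigma>r$ follows by comparison with the explicit supersolution $y_1(t)=(\beta_\sigma/\gamma_\sigma)^{\sigma/(\sigma+p-1)}+(\sigma/(\gamma_\sigma(p-1)))^{\sigma/(p-1)} t^{-\sigma/(p-1)}$, which gives a smoothing bound independent of $y(0)$---crucial because $u_0^n\to u_0$ only in $L^r(\Omega)$, not in $L^\sigma(\Omega)$, so $\|u_0^n\|_{L^\sigma(\Omega)}$ may blow up with $n$. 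The pointwise convergence $u_n^K(t,x)\to v^K(t,x)$ from Step 2 of the proof of Theorem \ref{thr:truncated_problem_well_posed}, together with Fatou's lemma, then transfers both estimates to $v^K$.

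For the trace bounds, integrating the differential inequality on $(\epsilon,T)$ provides uniform-in-$K$ control of $\int_\epsilon^T \|\nabla|u_n^K|^{\sigma/2}\|_{L^2(\Omega)}^2\,dt$. Applying (\ref{eq:estimate_Lr_trace}) pointwise in $t$ to $\varphi=u_n^K(t)$, integrating in time, and handling the resulting $\int_\epsilon^T\|u_n^K\|_{L^\sigma(\Omega)}^\sigma\,dt$ by the $L^\sigma$ estimates just derived gives (\ref{eq:estimate_Lsigma_trace_vK}) after passing $n\to\infty$; the $\sigma=r$ case starts at $\epsilon=0$ and uses the uniform bound on $\|u_0^n\|_{L^r(\Omega)}$. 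The main obstacle is precisely the uniformity in $K$ in the second paragraph: the direct route of applying Proposition \ref{prop:norm:ineq:bounds} to the pair $(f,g_K)$ would fail, since its proof uses a $q$-power bound on $g'$, whereas only the pointwise bound $|g_K'|\leq K$ from (\ref{eq:cutGK}) is available. The domination property (\ref{eq:gKincreases_in_K}) is what unlocks the argument, replacing $g_K$ by $g$ in the boundary integral before any derivative of the nonlinearity is computed.
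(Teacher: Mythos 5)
Your proposal is correct and follows essentially the same route as the paper: the same key step of using $sg_K(s)\leq sg(s)$ from (\ref{eq:gKincreases_in_K}) to replace $g_K$ by $g$ in the boundary term before any derivative of the nonlinearity enters, then running the argument of Proposition \ref{prop:norm:ineq:bounds} to get a $K$-uniform differential inequality, the singular ODE supersolution for the smoothing estimate, and the trace lemma (\ref{eq:estimate_Lr_trace}) for the boundary integrals. The only (harmless) difference is that you perform the energy computation on the approximants $u_n^K$ and pass to the limit by Fatou, whereas the paper works directly with $v^K$, which is already classical for $t>0$ by Theorem \ref{thr:truncated_problem_well_posed}.
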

\begin{proof}{}
  
\medskip 
\noindent {\bf Step 1. Uniform bounds in $K$ in $C([\eps,\infty);L^\sigma(\Omega))$}

We show now that $v^K(t)$ are uniformly bounded, with respect to $K$, in
$L^\sigma(\Omega)$ for all $1< \sigma < \infty$. 

From the regularity of $v^{K}$ in Theorem
\ref{thr:truncated_problem_well_posed},  multiplying the equation in
(\ref{eq:truncated_problem})  by 
$|v^K|^{\sigma-2}v^K$ we have, see Proposition \ref{prop:norm:ineq:bounds},
\begin{eqnarray*}
  \frac{1}{\sigma}\frac{\mathrm{d}}{\mathrm{d}t} \|v^K(t)\|^\sigma_{L^\sigma(\Omega)} +
 c \|\nabla |v^K(t)|^{\sigma/2}\|^2_{L^2(\Omega)} 
 & + & \int_{\Omega} f(v^K(t)) |v^K(t)|^{\sigma-2}v^K(t) \\
 & = &  \int_\Gamma g_K(v^K(t))
  |v^K(t)|^{\sigma-2}v^K(t) . 
\end{eqnarray*}
Since $sg_K(s) \leq sg(s)$  for all $s\in\R$, see (\ref{eq:gKincreases_in_K}),
we have 
\begin{eqnarray*}
  \frac{1}{\sigma}\frac{\mathrm{d}}{\mathrm{d}t} \|v^K(t)\|^\sigma_{L^\sigma(\Omega)} +
  c \|\nabla |v^K(t)|^{\sigma/2}\|^2_{L^2(\Omega)} 
 & + & \int_{\Omega} f(v^K(t)) |v^K(t)|^{\sigma-2}v^K(t) \\
 & \leq &  \int_\Gamma g(v^K(t))
  |v^K(t)|^{\sigma-2}v^K(t) . 
\end{eqnarray*}
Finally, proceeding as in (\ref{eq:energy_NL}) we have
\begin{equation} \label{eq:mainEstimate:appr:Lsigma}
 {\frac{1}{\sigma}}  \frac{\mathrm{d}}{\mathrm{d}t} \|v^K(t)\|^\sigma_{L^\sigma(\Omega)} 
  +  {\frac{2(\sigma-1)}{\sigma^2}} \|\nabla(|v^K(t)|^{\sigma/2})\|_{L^2(\Omega)}^2+
  A \|v^K(t)\|^{\sigma+p-1}_{L^{\sigma+p-1}(\Omega)} \leq B
\end{equation}
with $A$  depending on the constants appearing on
(\ref{eq:mainHyp:fg}) but not on $\sigma, K$ or $u_{0}$ and $B>0$ depending on
$\sigma$ but not in $K$  or $u_{0}$.

In particular, denoting $y(t) =  \|v^K(t)\|^\sigma_{L^\sigma
  (\Omega)}$ and dropping the gradient term above,  we get that  $y(t)$
satisfies the following differential inequality
\begin{displaymath}
  \dot y (t)  + \gamma_{\sigma} y^{\frac{\sigma+p-1}{\sigma}}(t)\leq
  \beta_{\sigma} 
\end{displaymath}
for some $\beta_{\sigma}, \gamma_{\sigma}>0$ not depending on $K$ or
$u_{0}$. 

If $\sigma= r$, since $y(0) <\infty$, we have (\ref{eq:estimate_vK_Lr}). If
$\sigma > r$, let $z(t)$ be the solution of
\begin{displaymath}
  \dot z + \gamma_{\sigma} z^\frac{\sigma+p-1}{\sigma} = \beta_{\sigma}
\end{displaymath}
with $\lim_{t\to 0^{+}} z(t) = \infty$. 
Then from \cite{Temam1988}, Lemma 5.1, Chapter 3, page 167, we have 
\begin{displaymath}
  z(t) \leq \left({\beta_{\sigma} \over \gamma_{\sigma}}\right)^{\sigma \over \sigma + p-1} + {1
    \over \left(\gamma_{\sigma} {p-1 \over \sigma} t\right)^{\sigma \over p-1} } ,
  \quad t>0 . 
\end{displaymath}

Now, $0\leq y(t)\leq z(t)$ for all $0< t$ and then   we get, using
$a^\sigma + b^{\sigma} \leq (a+b)^\sigma$ for $a,b>0$,  
\begin{displaymath}
   \|v^K(t;u_0)\|_{L^\sigma (\Omega)} \leq \left({\beta_{\sigma} \over \gamma_{\sigma}}\right)^{1
     \over \sigma + p-1} + \left({\sigma 
    \over \gamma_{\sigma} (p-1) }\right)^{1 \over p-1} {1
    \over   t^{1 \over p-1} }  ,
  \quad t>0 .  
\end{displaymath}
Hence, we get
(\ref{eq:estimate_vK_Lsigma}). In particular, we obtain uniform
estimates in $L^{\sigma}(\Omega)$ for 
$t\geq \eps >0$.

\medskip 
\noindent {\bf Step 2.  Uniform bounds in $L^{\sigma}([\eps,T];L^\sigma(\Gamma))$}

Integrating (\ref{eq:mainEstimate:appr:Lsigma}) we get for any $0 <
\eps < T$, 
\begin{equation}
  \label{eq:mainEstimate:appr}
  \sup_{\eps\leq t \leq T}\|v^K(t)\|_{L^{\sigma}(\Omega)}^{\sigma} +
c_{1}  \int_\eps^T\left\|\, \nabla
    |v^K(s)|^{\sigma/2}\right\|_{L^2(\Omega)}^2\,\textrm{d}{s} +
  c_2 \int_{\eps}^{T} \|v^K(s)\|^{\sigma+p-1}_{L^{\sigma+p-1}(\Omega)}
  \,\textrm{d}{s} \leq
  c_{3} T + \|v^K(\eps)\|_{L^{\sigma}(\Omega)}^{\sigma}   
\end{equation}
for some constants depending on $\sigma$ but not on $K$ or $u_{0}$. 
Hence, (\ref{eq:estimate_Lr_trace}) and (\ref{eq:mainEstimate:appr}) give,
\begin{displaymath}
     \int_\eps^T \int_\Gamma |v^K|^\sigma \leq
   \int_{\eps}^{T} \|\nabla 
  (|v^K|^{\sigma/2})\|_{L^2(\Omega)}^2 +
  c \int_{\eps}^{T} \|v^K\|_{L^\sigma(\Omega)}^\sigma  \leq
  c_{3}
   T + \|v^K(\eps)\|_{L^{\sigma}(\Omega)}^{\sigma} . 
\end{displaymath}
and we get (\ref{eq:estimate_Lsigma_trace_vK}). 
\end{proof}

\begin{remark}
A careful analysis of the constants in (\ref{eq:estimate_vK_Lsigma}),
which can be traced back to   (\ref{eq:mainEstimate:appr:Lsigma}) and
(\ref{eq:energy_NL}), shows that we can not pass to the limit as
$\sigma \to \infty$ in (\ref{eq:estimate_vK_Lsigma}). Therefore with
the result above we are not able to obtain $L^{\infty}(\Omega)$
estimates, uniform in $K$. 

Having such estimates is very important as we now show. 

\end{remark}

We now show that a uniform $L^\infty(\Omega)$--bound for $\{v^K\}_K$ it is
enough to ensure that any limit of the family $\{v^K\}_K$ of solutions of
(\ref{eq:truncated_problem}) is a classical solution of
(\ref{eq:pbFG:monotonic}) for positive times for which the variations of
constant formula holds. We will use this result later to construct a solution of
(\ref{eq:pbFG:monotonic}). In fact we will show later that such uniform
$L^\infty(\Omega)$--bound  holds true. 

\begin{proposition}
\label{prop:regFromBounds}
Let $1<r< r_{0}$ and $u_0\in L^r(\Omega)$. Suppose that for any
$\epsilon >0$ and $T>0$ the solutions $v^K(\cdot;u_0)$
  of (\ref{eq:truncated_problem}) obtained in Theorem
  \ref{thr:truncated_problem_well_posed} satisfy 
\begin{equation}
\label{eq:bound:vK:enough}
  \|v^K\|_{L^\infty([\epsilon,T]\times \Omega)} \leq C(\epsilon,T)
\end{equation}
 with $C(\epsilon,T)$ not depending on $K$. Then, there
exists a subsequence of $\{v^K\}_{K}$, which we denote the same, such that
\begin{displaymath}
  v=   \lim_{K\to\infty}v^K  \quad \textrm{in} \quad C_\mathrm{loc}((0,\infty),
  H^{\alpha}_{\sigma}(\Omega)), \quad v(0)= u_{0},
\end{displaymath}
for any $r\leq \sigma$ and $\alpha<1+\frac{1}{\sigma}$. Moreover, for
any such subsequence, the limit function satisfies 
\begin{equation} \label{eq:estimate_v_Lr}
\|v(t)\|_{L^r (\Omega)}  \leq \max\{
\|u_{0}\|_{L^r (\Omega)} ,   \left({\beta_{r} \over \gamma_{r}}\right)^\frac{1}{r + p
  -1}\}, \qquad t\geq 0,
\end{equation}
and 
\begin{equation} \label{eq:estimate_v_Lsigma}
  \|v(t)\|_{L^\sigma (\Omega)} \leq \left({\beta_{\sigma} \over \gamma_{\sigma}}\right)^{1
     \over \sigma + p-1} + \left({\sigma 
    \over \gamma_{\sigma} (p-1) }\right)^{1 \over p-1} 
      t^{-\frac{1}{ p-1}} , \qquad   t>0,
\end{equation}
with $\beta_{\sigma}, \gamma_{\sigma}$ for $\sigma \geq r$ as in
(\ref{eq:estimate_vK_Lsigma}). Also,  for any $\eps>0$  
\begin{displaymath}
  \|v(t)\|_{H^{\alpha}_{\sigma} (\Omega)} \leq C(\eps)
 \quad \textrm{for all} \quad t\geq \eps ,
\end{displaymath}
with a bound independent of  $u_{0}$, for any $\sigma>1$
and $\alpha < 1 + {1\over \sigma}$. Furthermore $v$ satisfies the
variation of constants formula 
\begin{displaymath}
  v(t) = S(t-\eps)v(\eps) + \int_\eps^t S(t-s)\left(
  -f_{\Omega}(v(s)) + g_{\Gamma}(v(s))\right) \,\textrm{d}s , \quad t\geq \eps ,
\end{displaymath}
where $S(t)$ denotes the strongly continuous analytic semigroup generated by
$\Delta$ in $L^r(\Omega)$ with homogeneous Neumann boundary conditions.

\end{proposition}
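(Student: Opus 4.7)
The plan is to first reduce to the situation where the truncation $g_K$ is inactive, then bootstrap the uniform $L^\infty$ hypothesis to higher regularity, extract a convergent subsequence, and finally pass to the limit in the variation of constants formula. The decisive first observation is that hypothesis (\ref{eq:bound:vK:enough}) together with the defining property (\ref{eq:increasing_interval}) of the intervals $I_K$ implies that for every $\epsilon>0$ and $T>0$ there exists $K_0=K_0(\epsilon,T)$ such that $g_K(v^K(t,x)) = g(v^K(t,x))$ on $[\epsilon,T]\times\overline\Omega$ for all $K\geq K_0$. Hence on such intervals $v^K$ genuinely solves (\ref{eq:pbFG:monotonic}) with the original $g$.

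With $v^K$ uniformly bounded in $L^\infty([\epsilon/2,T]\times\overline\Omega)$, both $f(v^K)$ and the boundary trace $g(v^K)$ are uniformly bounded in $L^\infty$. Writing the variation of constants formula of Theorem \ref{thr:truncated_problem_well_posed} starting at $\epsilon/2$, and applying the smoothing properties of the Neumann heat semigroup $S(t)$ in $L^\sigma(\Omega)$ (together with the boundary-flux estimates of \cite{arrieta99:_parab}), I would derive uniform-in-$K$ estimates
\begin{displaymath}
\|v^K(t)\|_{H^\alpha_\sigma(\Omega)} \leq C(\epsilon,T,\sigma,\alpha), \qquad t\in[\epsilon,T],
\end{displaymath}
for every $1<\sigma<\infty$ and $\alpha<1+1/\sigma$. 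Independence from $u_0$ arises because, after the smoothing on $[\epsilon/2,\epsilon]$, the bound on $v^K(\epsilon)$ is controlled by the uniform $L^\infty$ hypothesis rather than by $u_0$; the integral contribution in the VOC is also dominated by the same universal bound.

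Time equicontinuity in a slightly weaker norm $H^{\alpha'}_\sigma$ with $\alpha'<\alpha$ follows from the standard Hölder estimate for analytic semigroups applied to the VOC formula. Together with the compact embedding $H^\alpha_\sigma\hookrightarrow H^{\alpha'}_\sigma$, an Arzelà-Ascoli argument gives a subsequence converging in $C([\epsilon,T];H^{\alpha'}_\sigma(\Omega))$. A diagonal extraction over $\epsilon\downarrow 0$, $T\uparrow\infty$, $\alpha'\uparrow 1+1/\sigma$ and $\sigma\uparrow\infty$ yields $v=\lim_K v^K$ in $C_{\text{loc}}((0,\infty);H^\alpha_\sigma)$ for all admissible $\sigma,\alpha$. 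The estimates (\ref{eq:estimate_v_Lr}) and (\ref{eq:estimate_v_Lsigma}) pass to the limit directly from Proposition \ref{prop:bounds_vK_Lsigma}. Passing to the limit in the VOC on $[\epsilon,T]$ is justified by the strong convergence and the continuity of $f,g$; crucially, the identity $g_K(v^K)=g(v^K)$ for $K\geq K_0(\epsilon,T)$ from the first paragraph guarantees that the limiting formula involves $g$, not $g_K$.

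The most delicate point is $v(0)=u_0$, which I interpret as $v(t)\to u_0$ in $L^r(\Omega)$ as $t\to 0^+$. Using the uniform $L^r$ bound (\ref{eq:estimate_vK_Lr}) on $v^K$ and the VOC formula at level $K$, together with the $L^r$-boundary-trace estimate (\ref{eq:estimate_Lsigma_trace_vK}) and the uniform $L^\infty$ hypothesis on $[\epsilon,T]$, I would pass to the limit and combine with strong continuity of $S(t)u_0$ at $t=0$ in $L^r(\Omega)$ to conclude that $v(t)\to u_0$. The main obstacles are the rigorous handling of the inhomogeneous boundary flux inside the semigroup calculus in order to extract the $H^\alpha_\sigma$ bounds from uniform $L^\infty$ bounds, and controlling the behavior near $t=0$ where the higher-order estimate (\ref{eq:estimate_vK_Lsigma}) blows up like $t^{-1/(p-1)}$, forcing the continuity argument to rely on the $L^r$-level bounds rather than on the stronger $L^\sigma$ bounds.
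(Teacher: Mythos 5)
Your main line of argument matches the paper's: the key first observation (that \eqref{eq:bound:vK:enough} together with \eqref{eq:increasing_interval} makes the truncation inactive for $K\geq K_0(\eps,T)$, so that $v^K$ solves \eqref{eq:pbFG:monotonic} itself on $[\eps,T]$ and lands in a subcritical space) is exactly the paper's starting point, and the passage to the limit in the variation of constants formula is handled the same way. You differ in two places. For the uniform $H^\alpha_\sigma$ bounds and the extraction of a convergent subsequence, the paper simply quotes Theorem \ref{th:attractor_subcritical} applied with initial data $v^K(2\eps)$ (whose $L^\sigma$ bound, from \eqref{eq:estimate_vK_Lsigma}, is what makes the constants independent of $u_0$ --- note that your route via the hypothesis $C(\eps,T)$ alone does not give $u_0$-independence, since \eqref{eq:bound:vK:enough} is only assumed uniform in $K$), and then gets convergence in $C([\eps,T];H^\alpha_\sigma)$ from the Lipschitz continuous dependence on initial data \eqref{epsregsol2} once a subsequence of $\{v^K(2\eps)\}$ converges in $L^\sigma$. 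Your Arzel\`a--Ascoli/equicontinuity alternative also works and is a legitimate variant; the paper's route avoids having to prove time equicontinuity by hand.

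The genuine problem is your last paragraph. The proposition does not assert, and the paper does not prove here, that $v(t)\to u_0$ in $L^r(\Omega)$: the statement $v(0)=u_0$ is simply the assignment of the initial value (each $v^K(0)=u_0$, and the locally uniform convergence is only on $(0,\infty)$), and the variation of constants formula is claimed only from $\eps>0$. Your sketched argument for $L^r$-continuity at $t=0$ cannot be completed with the available estimates: since $1<r<r_0=\tfrac{N}{2}(p-1)$, the interior term $f(v^K(s))\sim |v^K(s)|^p$ controlled via \eqref{eq:estimate_vK_Lr} forces a singularity $(t-s)^{-\frac{N(p-1)}{2r}}$ with exponent $r_0/r>1$ in the semigroup estimate, and the sharper bound \eqref{eq:estimate_vK_Lsigma} produces $s^{-p/(p-1)}$, which is likewise non-integrable at $s=0$; this is precisely the supercriticality obstruction. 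The paper addresses attainment of the initial datum separately in Theorem \ref{thr:continuity_at_t=0}, and only in the weaker topology $L^\alpha_{\mathrm{loc}}(\Omega)$, $\alpha<r$, by localizing away from the boundary with a cutoff and working with the Dirichlet semigroup. So you should drop the claim of $L^r$-convergence at $t=0$ (it is not needed for this proposition) or be aware that establishing even a weaker version requires a substantially different argument.
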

\begin{proof}
  Observe that from the $L^{\infty}(\Omega)$ bounds, given $\eps>0$, there
  exists $K_0(\eps)$ such that for $K \geq K_{0}(\eps)$ we have
  $g_{K}(v^{K}(t)) =   g(v^{K}(t))$ for $t\geq \eps$, see (\ref{eq:increasing_interval}).  Then, for
  $t\geq \eps$, $v^{K}(t)$ is actually   a solution of
  (\ref{eq:pbFG:monotonic}) and this solutions lies, for $t\geq 
  \eps$, in a space in which (\ref{eq:pbFG:monotonic}) is subcritical.
 
  Then Theorem \ref{th:attractor_subcritical} with initial data $v^K(\eps)$,
  implies that, for $K \geq K_{0}(\eps)$,
  \begin{displaymath}
    \|v^K(t)\|_{H^{\alpha}_{\sigma} (\Omega)} \leq C(\eps)
    \quad \textrm{for all}\quad t\geq 2\eps 
  \end{displaymath}
  with a bound independent of $K$ and  $u_{0}$, for any 
  $\sigma>1$ and   $\alpha < 1 + {1\over \sigma}$.

  In particular, the bounds above for $t= 2 \eps$, imply that, by taking a
  subsequence (which we will denote the same) if necessary  we can assume that
  $v^{K}(2\eps)$ converges in $L^{\sigma}(\Omega)$ for some $\sigma
  >r_{0}$. Hence $v(2\eps) = \lim_{K}
  v^{K}(2\eps)$ in $L^{\sigma}(\Omega)$ with $\sigma >r_{0}$.

  Now, using again the bounds above, the fact that the functions $f$
  and $g$ are    Lipschitz on bounded sets of $\R$,
   and that  $v^{K}$, for $K\geq K_{0}(\eps)$,   satisfies the variations
  of constants formula  
\begin{displaymath}
  v(t) = S(t-2\eps)v(2\eps) + \int_{2\eps}^t S(t-s)\left(
  -f_{\Omega}(v(s)) + g_{\Gamma}(v(s))\right) \,\textrm{d}s, \quad
t\geq 2\eps ,
\end{displaymath}
  where $S(t)$ denotes the strongly continuous analytic semigroup generated by
  $\Delta$ in $L^r(\Omega)$ with homogeneous Neumann boundary
  conditions, we have that for any $T>0$ and
$\alpha<1+\frac{1}{\sigma}$, there exists $L(T)$ such that for $K_{1}, K_{2}
\geq K_{0}(\eps)$, and $t\in [2\eps, T]$,
  \begin{displaymath}
    t^{\alpha} \|v^{K_{1}}(t) - v^{K_{2}}(t)\|_{H_{\sigma}^{2\alpha}(\Omega)}
    \leq L(T) \|v^{K_{1}}(2\eps) - v^{K_{2}}(2\eps)\|_{L^{\sigma}(\Omega)}
  \end{displaymath}
  as $K_{1}, K_{2} \to \infty$ since the solutions of
  (\ref{eq:pbFG:monotonic}) 
  depend continuously on the initial data, see also (\ref{epsregsol2}).

  Therefore, since $ \|v^{K_{1}}(2\eps) -
  v^{K_{2}}(2\eps)\|_{L^{\sigma}(\Omega)} \to 0$, as $K_{1}, K_{2}\to
  \infty$, we obtain convergence of $v^{K}$ in $C([\eps,T];
    H^{\alpha}_{\sigma}(\Omega))$

Hence, taking $\eps_{n} \to 0$ and using a Cantor diagonal argument,
we conclude that there exists a subsequence, that we denote the 
  same $\{v^{K}\}_{K}$, such that
  \begin{displaymath}
    v=   \lim_{K\to\infty}v^K  \quad \textrm{in} \quad C_\mathrm{loc}((0,\infty);
    H^{\alpha}_{\sigma}(\Omega)), \quad v(0)= u_{0}
  \end{displaymath}
  for any $r\leq \sigma$ and $\alpha<1+\frac{1}{\sigma}$. 

  Moreover, from (\ref{eq:estimate_vK_Lr}), (\ref{eq:estimate_vK_Lsigma}) we get
  (\ref{eq:estimate_v_Lr}) and (\ref{eq:estimate_v_Lsigma}) . Also, and for any
  $\eps >0$
  \begin{displaymath}
    \|v(t)\|_{H^{\alpha}_{\sigma} (\Omega)} \leq C(\eps)
    \quad \textrm{for all}\quad t\geq \eps 
  \end{displaymath}
  with a bound independent of $u_{0}$, for any $\sigma>1$ and $\alpha < 1 +
  {1\over \sigma}$. Moreover, passing to the limit in the variation of
  constants formula satisfied by $v^{K}$, we get that $v$ also
  satisfies the variation of constants formula 
  \begin{displaymath}
    v(t) = S(t- 2\eps)v(2\eps) + \int_{2\eps}^t S(t-s)\big(
    -f_{\Omega}(v(\cdot)) + g_{\Gamma}(v(\cdot))\big) \,\textrm{d}s,
    \quad t\geq 2\eps
  \end{displaymath}
for any $\eps>0$,   where $S(t)$ denotes the strongly continuous
analytic semigroup generated by   $\Delta$ in $L^r(\Omega)$ with
homogeneous Neumann boundary conditions. 
\end{proof}

\begin{remark}

\noindent i) Note that as in Remark \ref{rem:continuity_at0_VCF}, if we knew that
the limit function $v$ is continuous at $t=0$ in $L^{r}(\Omega)$ then
we could take $\eps \to 0$ in the variation of constants formula.  

In fact, below we will prove a weaker form of continuity at $t=0$ in
Theorem \ref{thr:continuity_at_t=0}

\noindent ii) Note that there might be many limit functions $v$ in the
argument in Proposition \ref{prop:regFromBounds}.

However, for  nonnegative solutions we  will show below that the limit
function is unique;  see Proposition \ref{prop:positiveSolns}.

\end{remark}

\begin{remark} \label{rem:seria_masfacil}
Observe that one could be tempted to follow the following argument: From
the  bounds on $v^{K}$ in $L^{\sigma}(\Omega)$ in Proposition
\ref{prop:bounds_vK_Lsigma} for $\sigma >r_{0}$, which are uniform in
$t\geq \eps$, $K$ and $u_{0}$, we use Theorem
\ref{th:attractor_subcritical} in $L^{\sigma}(\Omega)$ where
(\ref{eq:pbFG:monotonic}) is subcritical and then we obtain the
uniform bounds (\ref{eq:bound:vK:enough}) in Proposition
\ref{prop:regFromBounds}. 

However the estimates in Theorem \ref{th:attractor_subcritical} are
not known to be uniform with respect to the nonlinear terms, which
would be changing with $K$ in the argument above. 

Hence, to obtain the uniform bounds (\ref{eq:bound:vK:enough}) in
Proposition \ref{prop:regFromBounds} we use some comparison argument
below.

\end{remark}

\subsection{Positive solutions}
\label{sec:positive-solutions}

We show now that in the case of dealing with positive solutions, we
have the bounds (\ref{eq:bound:vK:enough}). Moreover in this case we can pass to
the limit in $K$ (and not only in a subsequence)  and obtain a unique solution of problem
(\ref{eq:pbFG:monotonic}) for which the conclusions of Proposition
\ref{prop:regFromBounds} holds. 

The tools we use here  will also be used to obtain  uniform bounds for
 general sign changing  solutions of the problem as we will show in
 Section \ref{sec:sign-chang-solut}.

Assume that $f(0) \leq 0$ and $g(0) \geq 0$ so that  if $0\leq u_{0} \in
L^{s}(\Omega)$ with $s > r_{0}$ then the solutions of (\ref{eq:pbFG:monotonic})
and (\ref{eq:truncated_problem}) are nonnegative for all times, see
Appendix A in \cite{arrieta00:_attrac_unif_bounds}. Recall that when
$s > r_{0}$ both problems (\ref{eq:pbFG:monotonic}) and
(\ref{eq:truncated_problem}) are subcritical in $L^{s}(\Omega)$.

The following result shows in particular that the bounds
(\ref{eq:bound:vK:enough}) in Proposition \ref{prop:regFromBounds}
hold true. 

\begin{proposition} \label{prop:bounds_vK_positive}
Assume  $f(0) \leq 0$ and $g(0) \geq 0$.  Then we have

\noindent i) For any $1<r< r_{0}$ and  $0\leq u_{0} \in L^{r}(\Omega)$ the solution
$v^{K}(\cdot;u_{0})$ of (\ref{eq:truncated_problem}) given in Theorem
\ref{thr:truncated_problem_well_posed} is nonnegative for all times.

\noindent ii) For any $\eps >0$,   $v^{K}(t;u_{0})$
is bounded  in $L^{\infty}(\Omega)$,  uniformly in $t\geq \eps$,  $K$ and 
$u_{0} \in L^{r}(\Omega)$.

\end{proposition}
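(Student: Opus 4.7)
The plan is to handle the two parts separately, exploiting the uniform-in-$K$ $L^\sigma$-estimates already established in Proposition \ref{prop:bounds_vK_Lsigma} together with a comparison argument that is available only for nonnegative solutions.

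For part (i), I would return to the approximation scheme in Step~1 of Theorem \ref{thr:truncated_problem_well_posed}. Specifically, approximate $u_0\ge 0$ in $L^r(\Omega)$ by $u_0^n \in L^\sigma(\Omega)$ with $u_0^n\ge 0$ and $\sigma>r_0$. The corresponding approximating solutions $u_n^K$ of (\ref{eq:pbFG:monotonic:appr}) are subcritical in $L^\sigma(\Omega)$ and thus classical for $t>0$. Since $f(0)\le 0$ and $g_K(0)\ge 0$, the classical parabolic maximum principle (as in Appendix~A of \cite{arrieta00:_attrac_unif_bounds}) yields $u_n^K(t,x)\ge 0$. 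Passing to the limit as $n\to\infty$ in $C([0,T];L^r(\Omega))$, as in (\ref{eq:convergence_2_vK_LrOmega}), gives $v^K\ge 0$.

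For part (ii), I would fix $\sigma>r_0$ and use (\ref{eq:estimate_vK_Lsigma}) at time $\eps/2$ to obtain
\[
  \|v^K(\eps/2)\|_{L^\sigma(\Omega)} \leq M(\eps,\sigma),
\]
with $M$ independent of $K$ and of $u_0$. Then I would let $W^K$ be the solution of the untruncated problem (\ref{eq:pbFG:monotonic}) with initial datum $W^K(0) = v^K(\eps/2) \ge 0$. Since $\sigma>r_0$, Theorem~\ref{th:attractor_subcritical} guarantees that $W^K$ is globally defined, nonnegative and classical for positive times. The decisive step is then a comparison argument: by part~(i) and (\ref{eq:gKincreases_in_K}) we have $g_K(v^K)\le g(v^K)$ on $\Gamma$, so $v^K$ is a subsolution of (\ref{eq:pbFG:monotonic}) while $W^K$ is a solution, with matching data at the shifted initial time. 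The parabolic maximum principle then gives
\[
  v^K(\eps/2+\tau,x) \leq W^K(\tau,x), \qquad \tau\geq 0.
\]
Finally, Theorem~\ref{th:attractor_subcritical}(iv) ensures that the orbit under (\ref{eq:pbFG:monotonic}) of any bounded set in $L^\sigma(\Omega)$ is bounded in some space embedding into $L^\infty(\Omega)$ for $\tau\ge\eps/2$. Applied to the uniformly bounded family $\{W^K(0)\}_K$, this yields $\|W^K(\tau)\|_{L^\infty(\Omega)}\le C(\eps)$ for all $\tau\ge\eps/2$, and hence $\|v^K(t)\|_{L^\infty(\Omega)}\le C(\eps)$ for all $t\ge\eps$, uniformly in $K$ and $u_0$.

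The main obstacle is precisely this comparison step: it works only because nonnegativity of $v^K$ turns the inequality $sg_K(s)\le sg(s)$ into the pointwise inequality $g_K(v^K)\le g(v^K)$ on $\Gamma$. For sign-changing solutions the opposite inequality holds on the set where $v^K<0$, so the comparison against the original problem is unavailable — which is why a separate, more delicate argument will be required for the general case in Section~\ref{sec:sign-chang-solut}.
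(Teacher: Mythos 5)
Your proposal is correct and follows essentially the same route as the paper: nonnegativity is obtained by approximating with nonnegative data in a subcritical space and passing to the limit, and the uniform $L^\infty$ bound comes from the uniform-in-$K$ $L^\sigma$ estimate of Proposition \ref{prop:bounds_vK_Lsigma} at a small positive time followed by comparison with the untruncated problem (\ref{eq:pbFG:monotonic}) via $g_K(s)\le g(s)$ for $s\ge 0$ and the dissipativity of Theorem \ref{th:attractor_subcritical}. Your closing observation about why this comparison fails for sign-changing solutions is exactly the point the paper addresses later with the auxiliary nonlinearities $f^{\pm}$, $g^{\pm}$.
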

\begin{proof}{}
To prove part i)  note that it is enough to take $u_{0}^{n} \geq 0$ in
(\ref{eq:pbFG:monotonic:appr}) and then $u_{n}^{K}(t) \geq 0$ for all
$t\geq 0$, which,
by the convergence in Theorem \ref{thr:truncated_problem_well_posed}, 
implies $v^{K}(t)\geq 0$ for all $t\geq 0$. 

Now, to prove part ii), from (\ref{eq:estimate_vK_Lsigma}) we have that for any $\eps
>0$, $v^{K}(\eps)$ belongs to a bounded set in $L^{\sigma}(\Omega)$
which is independent of $K$ and  $u_{0}$. Taking $\sigma
>r_{0}$ and using that $g_{K}(s) \leq g(s)$ for $s \geq 0$, see
(\ref{eq:gKincreases_in_K}), we have that 
\begin{displaymath}
  0\leq v^{K}(t+\eps) \leq u(t; v^{K}(\eps)) , \quad t\geq 0
\end{displaymath}
where $u(t; v^{K}(\eps))$ denotes the solution of
(\ref{eq:pbFG:monotonic}) with initial data $v^{K}(\eps) \in
L^{\sigma}(\Omega)$, $\sigma >r_{0}$. 

Therefore the dissipativity results in Section
\ref{sec:known-results-subcr}, see Theorem
\ref{th:attractor_subcritical},  imply that  $u(t;
v^{K}(\eps))$  is bounded in $L^{\infty}(\Omega)$  uniformly in $t\geq
\eps$,  $K$ and $u_{0}$, 
and so is $v^{K}(t)$ for $t\geq
2\eps$. 

Since  $v^{K}(t)$ is smooth up to the boundary of $\Omega$ for $t>0$,
then it is also  bounded in $L^{\infty}(\Gamma)$  uniformly   for $t\geq
2\eps$, $K$ and  $u_{0}$.  
\end{proof}

\begin{remark} \label{rem:negativesolutions}

If  $f(0)\geq 0$ and $g(0) \leq 0$, a  similar argument gives the
bounds on  non-negative solutions. In fact  now   $g_{K}(s) \geq g(s)$ for $s
\leq 0$, and  we have that  
\begin{displaymath}
0\geq v^{K}(t+\eps) \geq u(t; v^{K}(\eps)) , \quad t\geq 0
\end{displaymath}
where $u(t; v^{K}(\eps))$ denotes the solution of
(\ref{eq:pbFG:monotonic}) with initial data $v^{K}(\eps)$. 
\end{remark}

Now using Proposition \ref{prop:regFromBounds} and the fact that the
solutions $v^{K}$ are nonnegative, we can pass to the limit as $K\to
\infty$. Note that below the full family $v^{K}$ converges and not only
a subsequence.

\begin{proposition}
 \label{prop:positiveSolns}
 Assume  $f(0)
\leq 0$, $g(0) \geq 0$, $1<r< r_{0}$ and  $0\leq u_{0} \in L^{r}(\Omega)$.
Then for the solutions
$v^{K}(\cdot;u_{0})$ of (\ref{eq:truncated_problem}) given in Theorem
\ref{thr:truncated_problem_well_posed}, the limit 
\begin{displaymath}
   v=   \lim_{K\to\infty}v^K
\end{displaymath}
exists where $v\geq 0$ is as in Proposition \ref{prop:regFromBounds}. 
\end{proposition}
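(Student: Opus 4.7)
The plan is to combine the $L^\infty$ bounds already obtained in Proposition \ref{prop:bounds_vK_positive} with a monotonicity property of the family $\{v^K\}_K$ in order to pass from the subsequential convergence in Proposition \ref{prop:regFromBounds} to convergence of the full family. First I would apply Proposition \ref{prop:bounds_vK_positive} to verify that, for nonnegative initial data $u_0 \in L^r(\Omega)$, the hypothesis (\ref{eq:bound:vK:enough}) of Proposition \ref{prop:regFromBounds} holds. Thus, there exists some subsequence $\{v^{K_j}\}_j$ converging to a nonnegative function $v$ enjoying all the regularity and integral-equation properties of Proposition \ref{prop:regFromBounds}. It remains to upgrade this to convergence of the entire family.

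The key step is to prove that $K \mapsto v^K(t,x)$ is monotone nondecreasing for nonnegative data. For $K_1 < K_2$, property (\ref{eq:gKincreases_in_K}) together with $v^{K_1}(t,x)\geq 0$ (from part i) of Proposition \ref{prop:bounds_vK_positive}) yields $g_{K_1}(v^{K_1}) \leq g_{K_2}(v^{K_1})$ pointwise on $\Gamma$. Setting $w = v^{K_2} - v^{K_1}$, we would write
\begin{displaymath}
  w_t - \Delta w + a(x,t) w = 0 \ \text{in }\Omega, \qquad
  \frac{\partial w}{\partial \vec n} - b(x,t) w = h(x,t) \ \text{on }\Gamma, \qquad w(0) = 0,
\end{displaymath}
with $a,b \in L^\infty$ arising from the integrated mean-value expressions for $f(v^{K_2})-f(v^{K_1})$ and $g_{K_2}(v^{K_2})-g_{K_2}(v^{K_1})$, and with $h = g_{K_2}(v^{K_1}) - g_{K_1}(v^{K_1}) \geq 0$ on $\Gamma$. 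Since $v^K$ is smooth up to the boundary for $t>0$ by Theorem \ref{thr:truncated_problem_well_posed}, multiplying by $w^-$ and integrating by parts yields
\begin{displaymath}
  \tfrac12 \tfrac{d}{dt} \|w^-\|_{L^2(\Omega)}^2 + \|\nabla w^-\|_{L^2(\Omega)}^2 \leq C(\|a\|_\infty,\|b\|_\infty)\, \|w^-\|_{L^2(\Omega)}^2,
\end{displaymath}
where the sign of $h$ makes the boundary term harmless. Since $w(0) = 0$ in $L^r(\Omega)$, a Gronwall argument (applied first on $[\eps,T]$ and then letting $\eps \to 0$, using the continuity of $v^{K_i}$ at $t=0$ in $L^r(\Omega)$) shows $w^- \equiv 0$, i.e.\ $v^{K_1} \leq v^{K_2}$. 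Alternatively, this monotonicity can be obtained by going back to the smooth approximating sequences $u_n^{K_i}$ in Theorem \ref{thr:truncated_problem_well_posed} with ordered approximating initial data and applying the classical maximum principle, then passing to the limit in $n$.

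Having established monotonicity, the uniform $L^\infty$ bound from Proposition \ref{prop:bounds_vK_positive} ensures that $v^K(t,x)$ is a monotone increasing, pointwise bounded family, hence converges pointwise on $(0,\infty) \times \overline\Omega$ to some $\tilde v(t,x)$. By dominated convergence this limit coincides on $[\eps,T]\times \overline\Omega$ with any subsequential limit, and in particular with the function $v$ produced by Proposition \ref{prop:regFromBounds}. Therefore the entire family $v^K$ converges as $K \to \infty$ to $v \geq 0$, which is nonnegative by monotonicity together with $v^{1}\geq 0$ (say), and the convergence together with all the conclusions of Proposition \ref{prop:regFromBounds} holds for $v$.

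The main obstacle is the monotonicity/comparison step. The comparison argument is not entirely standard because the truncated problem has a nonlinear Robin-type boundary condition and the initial data sit in $L^r(\Omega)$ rather than a space of continuous functions; however the smoothing properties for $t>0$ furnished by Theorem \ref{thr:truncated_problem_well_posed} together with the continuity $v^{K_i}(\cdot) \in C([0,T];L^r(\Omega))$ should be enough to carry out the $w^-$ energy estimate, or equivalently to reduce to the classical case through the approximating sequence $u_n^{K}$ as in the proof of Theorem \ref{thr:truncated_problem_well_posed}.
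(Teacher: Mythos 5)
Your proposal is correct and essentially reproduces the paper's argument: monotonicity of $v^K$ in $K$ for nonnegative data (which the paper obtains precisely via your ``alternative'' route, comparing the classical approximants $u_n^{K_1}\leq u_n^{K_2}$ for fixed $n$ and letting $n\to\infty$), combined with the uniform $L^\infty$ bound of Proposition \ref{prop:bounds_vK_positive} and identification of the monotone pointwise limit with the subsequential limit of Proposition \ref{prop:regFromBounds}. In your primary energy-estimate variant, note only that the Gronwall constant must come from the one-sided bounds $f'\geq -L$ and $g_{K_2}'\leq K_2$ as in (\ref{eq:Lipschitz_truncated_problem}) --- not from $\|a\|_{L^\infty}$, which blows up as $\eps\to 0$ --- and that for $r<2$ the estimate should be run in $L^r$ rather than $L^2$ so that the continuity of $w$ at $t=0$ in $L^r(\Omega)$ suffices to close the argument.
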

\begin{proof}{} 
From Proposition \ref{prop:bounds_vK_positive} the functions $v^{K}$ are
nonnegative and from (\ref{eq:gKincreases_in_K}) $g_{K}(s)$ is
increasing in $K$ for $s\geq0$. Then it is not difficult to see that
for fixed $n\in \N$ the solutions in (\ref{eq:pbFG:monotonic:appr})
are nonnegative and increasing in $K$. Therefore the functions
$v^{K}$ in Theorem \ref{thr:truncated_problem_well_posed} are
increasing in $K$.

Using this and part ii) in Proposition \ref{prop:positiveSolns} we
have that  the limit  
\begin{displaymath}
0\leq v(t,x) = \lim_{K\to\infty}v^K(t,x)  \quad \mbox{exists pointwise
a.e. in $(0,T) \times \overline{\Omega}$},  
\end{displaymath}
and then $v$ coincides with the function $v$ in  Proposition
\ref{prop:regFromBounds} and the full family $v^{K}$ converges and not
only a subsequence. 
\end{proof}

\begin{remark} \label{rem:limit_negativesolutions}
If  $f(0) \geq 0$ and $g(0) \leq 0$, a  similar argument allows us to
pass to the limit on non-negative solutions (now  $v^K$ is decreasing
as $K \to \infty$).   
\end{remark}

\subsection{$L^\infty$ bounds for sign changing solutions}
\label{sec:sign-chang-solut}

Now we use the arguments in the former subsection to obtain the
uniform estimates (\ref{eq:bound:vK:enough}) for general sign changing
solutions $v^{K}(\cdot;u_{0})$ of (\ref{eq:truncated_problem}) given in Theorem
\ref{thr:truncated_problem_well_posed}.

\begin{proposition}
  \label{prop:unifBound:genSols}

Let $1<r< r_{0}$ and $u_0\in L^r(\Omega)$. Then for any $\eps >0$,
$v^{K}(t;u_{0})$ is bounded  in $L^{\infty}(\Omega)$,  uniformly in
$t\geq \eps$,  $K$ and   $u_{0} \in L^{r}(\Omega)$.  

Therefore  the uniform bounds (\ref{eq:bound:vK:enough}) hold and the
conclusions of Proposition 
\ref{prop:regFromBounds} apply.

\end{proposition}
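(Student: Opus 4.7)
My plan is to parallel the comparison strategy of Proposition \ref{prop:bounds_vK_positive}, but since $v^K$ may change sign, I compare it with solutions of two \emph{shifted} variants of (\ref{eq:pbFG:monotonic}) for which sign preservation is restored. The starting point is Proposition \ref{prop:bounds_vK_Lsigma}: for any fixed $\sigma > r_0$, the function $v^K(\epsilon/2;u_0)$ lies in a bounded set $\mathcal{B} \subset L^\sigma(\Omega)$ independently of $K$ and $u_0 \in L^r(\Omega)$.

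For the upper bound, I would introduce the shifts $\hat f(s) := f(s) - (f(0))^+$ and $\hat g(s) := g(s) + (g(0))^-$, where $a^+ := \max(a,0)$ and $a^- := \max(-a,0)$. Since $(\hat f)' = f'$ and $(\hat g)' = g'$, the pair $(\hat f,\hat g)$ still satisfies (\ref{eq:mainHyp:fg}) and (\ref{eq:nonlinearbalance_fg}), while in addition $\hat f(0) \leq 0$, $\hat g(0) \geq 0$, $\hat f \leq f$, and $\hat g \geq g$. Let $U(t)$ denote the solution of (\ref{eq:pbFG:monotonic}) with $(f,g)$ replaced by $(\hat f,\hat g)$ and initial datum $(v^K(\epsilon/2))^+ \in \mathcal{B}$. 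The restored sign conditions force $U(t) \geq 0$, and Theorem \ref{th:attractor_subcritical}(iv) applied to the shifted problem yields $\|U(t)\|_{L^\infty(\Omega)} \leq M$ for $t \geq \epsilon/2$, with $M$ depending only on $\sigma$ and $\epsilon$ (through $\mathcal{B}$), hence uniform in $K$ and $u_0$.

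The comparison $v^K(\cdot + \epsilon/2) \leq U$ is then obtained from an energy estimate on $z := v^K(\cdot + \epsilon/2) - U$, which satisfies $z(0) \leq 0$. On $\{z > 0\}$ one has $v^K > U \geq 0$, so the Lipschitz bound $|g_K'| \leq K$, the monotonicity $g_K(s) \leq g(s)$ for $s \geq 0$ from (\ref{eq:gKincreases_in_K}), and the inequality $g \leq \hat g$ combine pointwise to give $g_K(v^K) - \hat g(U) \leq K z^+$, while the interior contribution is $f(v^K) - \hat f(U) \geq -L z^+$. Testing the equation for $z$ against $z^+$, absorbing the trace term via (\ref{eq:estimate_Lr_trace}) exactly as in the derivation of (\ref{eq:Lipschitz_truncated_problem}), and invoking Gronwall's inequality then yields $\|z^+(t)\|_{L^2(\Omega)} \equiv 0$ (the $K$-dependent Gronwall constant is harmless since $z^+(0) = 0$). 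A symmetric construction with $\check f(s) := f(s) - (f(0))^-$, $\check g(s) := g(s) + (g(0))^+$ and initial datum $-(v^K(\epsilon/2))^- \leq 0$ produces a lower bound $v^K(\cdot + \epsilon/2) \geq U'$ with $\|U'(t)\|_{L^\infty(\Omega)} \leq M'$ uniform.

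Combining the two one-sided bounds gives $\|v^K(t)\|_{L^\infty(\Omega)} \leq \max(M,M')$ for $t \geq \epsilon$, uniformly in $K$ and $u_0$; this is (\ref{eq:bound:vK:enough}), and the remaining conclusions follow directly from Proposition \ref{prop:regFromBounds}. The hardest part will be designing the shifts so that both properties hold simultaneously: the monotonicity $\hat f \leq f$ and $\hat g \geq g$ is needed to preserve the direction of the comparison inequality, while the restored signs $\hat f(0) \leq 0$ and $\hat g(0) \geq 0$ are what force $U \geq 0$, which is precisely the hypothesis that makes the sign-sensitive monotonicity $g_K \leq g$ on $[0,\infty)$ available in the critical boundary estimate.
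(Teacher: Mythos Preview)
Your strategy matches the paper's: modify $f,g$ so that the comparison solutions are sign-definite, use Proposition~\ref{prop:bounds_vK_Lsigma} to place $v^K(\eps/2)$ in a $K$-independent bounded set of $L^\sigma(\Omega)$ with $\sigma>r_0$, and then invoke Theorem~\ref{th:attractor_subcritical} on the modified (subcritical) problems to get uniform $L^\infty$ bounds. The execution differs slightly: the paper constructs $f^\pm,g^\pm$ satisfying $g_K\le g^+$ and $g^-\le g_K$ on \emph{all} of $\R$ and for all $K$ (which cannot be done by a constant shift, since $g_K(s)\ge g(s)$ for $s<0$), and then cites a black-box comparison principle with initial data $\pm|v^K(\eps)|$. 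Your constant shifts only give $\hat g\ge g$, not $\hat g\ge g_K$ globally, so you correctly replace the black box by an explicit energy estimate on $z^+$, exploiting that on $\{z>0\}$ one has $v^K>U\ge 0$ and there $g_K\le g\le\hat g$. Both routes work; yours is more self-contained, the paper's is shorter once the comparison principle is granted.

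There is, however, a sign slip in your lower-bound construction. With $\check f(s)=f(s)-(f(0))^-$ and $\check g(s)=g(s)+(g(0))^+$ you obtain $\check f\le f$ and $\check g\ge g$, i.e.\ the \emph{same} direction of inequality as for $\hat f,\hat g$, and moreover $\check f(0)=f(0)-(f(0))^-=2f(0)<0$ whenever $f(0)<0$ (and $\check g(0)=2g(0)>0$ whenever $g(0)>0$), so $U'$ need not remain $\le 0$ and the argument breaks. The correct symmetric shifts are $\check f(s)=f(s)+(f(0))^-$ and $\check g(s)=g(s)-(g(0))^+$: then $\check f\ge f$ with $\check f(0)=(f(0))^+\ge 0$, and $\check g\le g$ with $\check g(0)=-(g(0))^-\le 0$. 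With these, $U'\le 0$ holds, and on $\{U'-v^K>0\}$ one has $v^K<U'\le 0$, whence $g_K(v^K)\ge g(v^K)\ge\check g(v^K)$ by (\ref{eq:gKincreases_in_K}), and the energy argument for $(U'-v^K)^+$ closes exactly as in your upper bound.
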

\begin{proof}
From the assumptions on $f$ and $g$ it is clear that we can construct
$C^{1}(\R)$ functions $f^{-}$, $g^{+}$  satisfying
(\ref{eq:mainHyp:fg}),   $g^+(0)\geq 0$  and
  $g_K(s)\leq  g^+(s)$ for all $s\in\R$ and $K>0$ and  $f^-(0)\leq 0$
  and   $f(s)\geq  f^-(s)$ for all $s\in\R$. 

Now, from (\ref{eq:estimate_vK_Lsigma}) we have that for any $\eps
>0$, $v^{K}(\eps)$ belongs to a bounded set in $L^{\sigma}(\Omega)$
which is independent of $K$ and  $u_{0}$. Then we take  $\sigma
>r_{0}$ and   let  $0 \leq U(t; v^{K}(\eps))$ be the solution of the following problem
  \begin{displaymath}
    \left\{\begin{array}{rclcl}
        U_t  - \Delta U + f^-(U) & =
        & 0 & \mathrm{in} & \Omega\\
        \displaystyle\frac{\partial U}{\partial \vec{n}}&=&  g^+(U) &
        \mathrm{on} & \Gamma\\
        U(0) &=& |v^{K}(\eps)| \in L^{\sigma}(\Omega) . 
      \end{array}\right.
  \end{displaymath}
Then the comparison principle for subcritical problems, see Appendix A in
\cite{arrieta00:_attrac_unif_bounds}, implies that 
\begin{displaymath}
v^{K}(t+\eps) \leq U(t; v^{K}(\eps))  , \quad t\geq 0 . 
\end{displaymath}

Analogously, we can construct
$C^{1}(\R)$ functions $f^{+}$, $g^{-}$  satisfying
(\ref{eq:mainHyp:fg}),   $g^-(0)\leq 0$  and
  $g^-(s) \leq g_K(s)$ for all $s\in\R$ and $K>0$ and  $f^+(0)\geq 0$
  and   $f^{+}(s)\leq  f(s)$ for all $s\in\R$.  

Then let $W(t;-|v^{K}(\eps)|)\leq 0$ be the solution of the problem
\begin{displaymath}
    \left\{\begin{array}{rclcl}
        U_t  - \Delta U + f^+(U) & =
        & 0 & \mathrm{in} & \Omega\\
        \displaystyle\frac{\partial U}{\partial \vec{n}}&=&  g^-(U) &
        \mathrm{on} & \Gamma\\
        U(0) &=& -| v^{K}(\eps)| \in L^{\sigma}(\Omega) 
      \end{array}\right.
\end{displaymath}
 see Remark \ref{rem:negativesolutions}. 

Again, the comparison principle for subcritical problems implies
\begin{displaymath}
  W(t;-|v^{K}(\eps)|) \leq v^K(t+ \eps) \leq
  U(t;|v^{K}(\eps)|), \quad t\geq 0. 
\end{displaymath}

Therefore the dissipativity results in Section
\ref{sec:known-results-subcr}, see Theorem
\ref{th:attractor_subcritical},  imply that  $W(t;-|v^{K}(\eps)|)$
and $U(t;|v^{K}(\eps)|)$ are  bounded in $L^{\infty}(\Omega)$
uniformly in $t\geq 
\eps$,  $K$ and $u_{0}$, 
and so is $v^{K}(t)$ for $t\geq 2\eps$. 

Since  $v^{K}(t)$ is smooth up to the boundary of $\Omega$ for $t>0$,
then it is also  bounded in $L^{\infty}(\Gamma)$  uniformly   for $t\geq
2\eps$, $K$ and  $u_{0}$.  
\end{proof}

Observe that the results in Propositions \ref{prop:regFromBounds} and
\ref{prop:unifBound:genSols} allows us to define solutions of
(\ref{eq:pbFG:monotonic}) as follows. 

\begin{definition} \label{def:solution_supercritical}
  
For $1<r<r_0$ and $u_{0} \in L^{r}(\Omega)$ a solution of
(\ref{eq:pbFG:monotonic}) that we denote $u(t;u_{0})$ is any of 
the limit functions $v$ in Proposition
\ref{prop:regFromBounds}. 

\end{definition}

Note in case of nonnegative solutions, from Proposition
\ref{prop:positiveSolns},  such solution is unique and the same holds
for negative solutions, see Remark \ref{rem:limit_negativesolutions}.

\subsection{Continuity at $t=0$} 

In this section we prove that any solution of
(\ref{eq:pbFG:monotonic}) as in Definition
\ref{def:solution_supercritical} attains its initial data $u_{0}\in
L^{r}(\Omega)$,  in compact subsets of $\Omega$ but in a slightly weaker norm. In fact we have

\begin{theorem} \label{thr:continuity_at_t=0}
 
For $1<r<r_0$ and $u_{0} \in L^{r}(\Omega)$  any solution
$u(t;u_{0})$,  of
(\ref{eq:pbFG:monotonic}) as in Definition
\ref{def:solution_supercritical} satisfies for any $1\leq \alpha <r$ 
\begin{displaymath}
  u(t;u_{0}) \to u_{0} \quad \mbox{as $t\to 0$, in
    $L^{\alpha}_{loc}(\Omega)$} . 
\end{displaymath}

\end{theorem}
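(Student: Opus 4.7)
The plan is to prove that $v(t) \to u_0$ strongly in $L^r(\Omega)$ as $t \to 0^+$; this implies the claimed $L^\alpha_{\textrm{loc}}$ convergence for every $1 \leq \alpha < r$ (and, in fact, for $\alpha \leq r$). The strategy is to combine weak convergence against compactly supported test functions with convergence of the $L^r$-norm, and conclude via the uniform convexity of $L^r(\Omega)$ for $1 < r < \infty$.

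For the weak convergence, fix $\phi \in C_c^\infty(\Omega)$, which vanishes together with all its derivatives near $\Gamma$ so no boundary contribution appears under integration by parts. Integration by parts on $[\eps, t]$ (where $v^K$ is classical) followed by the limit $\eps \to 0$, using $v^K \in C([0,T]; L^r(\Omega))$, yields for each $K$ and $t>0$
\[
\int_\Omega (v^K(t) - u_0)\phi \, dx = \int_0^t \int_\Omega \bigl(v^K \Delta\phi - f(v^K)\phi\bigr) \, dx \, ds.
\]
Combining the uniform $L^r$ bound (\ref{eq:estimate_vK_Lr}) with the space-time estimate $\int_0^T\!\int_\Omega |v^K|^{r+p-1} \leq C$ that follows from (\ref{eq:mainEstimate:appr:Lsigma}) at $\sigma = r$, an application of Hölder bounds the right-hand side by $C(\phi)\bigl(t + t^{(r-1)/(r+p-1)}\bigr)$, uniformly in $K$. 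Passing to the limit along the subsequence $v^K \to v$ given by Proposition \ref{prop:regFromBounds}, and invoking Vitali's theorem for the nonlinear term (pointwise a.e. convergence from the convergence in $C_{\textrm{loc}}((0,\infty); H^\alpha_\sigma(\Omega))$, equi-integrability of $|v^K|^p$ from the uniform $L^{r+p-1}$ bound), the same estimate holds with $v$ in place of $v^K$, so $\int_\Omega (v(t)-u_0)\phi\,dx \to 0$ as $t \to 0$. Since $\|v(t)\|_{L^r(\Omega)}$ is uniformly bounded by (\ref{eq:estimate_v_Lr}) and $C_c^\infty(\Omega)$ is dense in $L^{r'}(\Omega)$, this upgrades to $v(t) \rightharpoonup u_0$ weakly in $L^r(\Omega)$.

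For the norm, integrate (\ref{eq:energy_NL}) with $\sigma = r$ on $[\eps, t]$, drop the nonnegative gradient and reaction terms, and let $\eps \to 0$ using the $L^r$-continuity of $v^K$ at $0$ to obtain $\|v^K(t)\|_{L^r}^r \leq \|u_0\|_{L^r}^r + rBt$, uniformly in $K$. Letting $K \to \infty$ and using strong $L^r$ convergence of $v^K(t) \to v(t)$ gives $\|v(t)\|_{L^r}^r \leq \|u_0\|_{L^r}^r + rBt$, hence $\limsup_{t \to 0}\|v(t)\|_{L^r} \leq \|u_0\|_{L^r}$. Combined with the weak lower semicontinuity $\|u_0\|_{L^r} \leq \liminf_{t \to 0}\|v(t)\|_{L^r}$, this yields $\|v(t)\|_{L^r} \to \|u_0\|_{L^r}$. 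Uniform convexity of $L^r(\Omega)$ then promotes weak convergence plus norm convergence to strong convergence $v(t) \to u_0$ in $L^r(\Omega)$, from which the claimed $L^\alpha_{\textrm{loc}}$ convergence follows. The main technical obstacle is justifying the passage $K \to \infty$ in the nonlinear term $\int_0^t\!\int_\Omega f(v^K)\phi$, and it is precisely the uniform space-time $L^{r+p-1}$ estimate extracted from the energy inequality that makes the Vitali argument go through.
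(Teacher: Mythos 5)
Your argument is correct, but it is genuinely different from the paper's, and in fact it proves more than the theorem asserts. The paper localizes: it sets $z^{K}=v^{K}\varphi$ with $\varphi$ a cutoff, treats the resulting Dirichlet problem on $\mathrm{supp}(\varphi)$ by the variation of constants formula, and splits the forcing into $-f(v^{K})\varphi$, $-2\nabla v^{K}\cdot\nabla\varphi$ and $-v^{K}\Delta\varphi$. The bottleneck there is the nonlinear piece: interpolating between the $L^{r}$ bound (\ref{eq:estimate_vK_Lr}) and the $L^{\sigma}$ bound (\ref{eq:estimate_vK_Lsigma}) only places $f(v^{K})$ in $L^{1}([0,T];L^{\alpha})$ for $\alpha<r$ (the singularity $t^{-p\theta/(p-1)}$ must be integrable), which is exactly why the paper's conclusion is local and restricted to $\alpha<r$. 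You avoid interpolation of the instantaneous norms altogether and instead exploit the \emph{space--time} integrability $\int_0^T\!\!\int_\Omega|v^{K}|^{r+p-1}\le C$ coming from integrating (\ref{eq:mainEstimate:appr:Lsigma}) at $\sigma=r$ together with (\ref{eq:estimate_vK_Lr}) — an estimate the paper derives (cf.\ (\ref{eq:mainEstimate:appr})) but does not use for this purpose. Since $r+p-1>p$, this makes $f(v^{K})$ uniformly integrable on $(0,t)\times\Omega$ with $\int_0^t\!\!\int_\Omega|f(v^{K})|\lesssim t+t^{(r-1)/(r+p-1)}$ uniformly in $K$, giving weak $L^{r}(\Omega)$ continuity at $t=0$ (compactly supported test functions kill the boundary terms, and density in $L^{r'}$ plus the uniform $L^{r}$ bound upgrades this to weak convergence). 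The energy inequality then gives $\limsup_{t\to0}\|v(t)\|_{L^r}\le\|u_0\|_{L^r}$, and Radon--Riesz in the uniformly convex space $L^{r}(\Omega)$ yields \emph{strong} convergence $v(t)\to u_0$ in $L^{r}(\Omega)$ — global, up to the boundary, and with $\alpha=r$ — which strictly improves the stated theorem. All the ingredients you invoke are available uniformly in $K$ (the constants in (\ref{eq:energy_NL}) and (\ref{eq:mainEstimate:appr:Lsigma}) do not depend on $K$, and $v^{K}\in C([0,T];L^{r}(\Omega))$ with $v^{K}(0)=u_0$ justifies the limits $\eps\to0$), and the argument applies to every subsequential limit in Proposition \ref{prop:regFromBounds}, as required by Definition \ref{def:solution_supercritical}. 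The only inessential detour is the Vitali step: you do not need to pass to the limit inside the nonlinear integral, since the uniform bound on the right-hand side transfers directly to the limit of the left-hand side.
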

\begin{proof}
  Let $\Omega_{1}$ be a smooth open subset such that  $\overline{\Omega_{1}}
\subset \Omega$ and let $\varphi \in \mathcal{D}(\Omega)$ such that
$0\leq \varphi\leq 1$, $\varphi=1$ in $\Omega_{1}$ and $\Omega_{0}
=supp(\varphi)$ satisfies $\overline{\Omega_{1}}
\subset \Omega_{0}$ and $\overline{\Omega_{0}}
\subset \Omega$. 

Denote then $z^{K} = v^{K} \varphi$ which satisfies  
\begin{equation} \label{eq:truncated_equation_vK}
    \left\{\begin{array}{rclcl}
        z_t  - \Delta z  & = & -f(v^{K})\varphi  -2 \nabla v^{K}
        \nabla \varphi - v^{K} \Delta \varphi
         & \mathrm{in} & \Omega_{0}\\
        z&=& 0 &         \mathrm{on} & \partial\Omega_{0} \\
        z(0) &=& u_{0}  \varphi  \in L^{r}(\Omega_{0}) . 
      \end{array}\right.
\end{equation}

Now denote $h^{K}(t) = -f(v^{K})\varphi  -2 \nabla v^{K} \nabla \varphi -
v^{K} \Delta \varphi = h_{1}^{K}(t) + h_{2}^{K}(t) + h_{3}^{K}(t)$ and note that
from (\ref{eq:estimate_vK_Lr})  we have, for any $T>0$,  $h_{3}^{K} \in L^{\infty}([0,T],
L^{r}(\Omega_{0}))$, while $h_{2}^{K} \in L^{\infty}([0,T],
H^{-1,r}(\Omega_{0}))$ with norms bounded independent of $K$. 

On the other hand, 
observe that for any $\alpha \geq 1$
such that $\alpha p > r$ if we take $\sigma > \alpha p$ then we can
write
\begin{displaymath}
  {1 \over \alpha p} = {\theta \over \sigma} + {1-\theta \over r},
  \quad \theta \in (0,1) . 
\end{displaymath}
Then from (\ref{eq:estimate_vK_Lr}) and
(\ref{eq:estimate_vK_Lsigma}) and by
interpolation we get, using (\ref{eq:mainHyp:fg}),  
\begin{displaymath}
  \|f(v^{K})\|_{L^{\alpha}(\Omega)} \leq C \big(1 +
  \|v^{K}\|_{L^{\alpha p}(\Omega)}^{p}\big) \leq  C \big(1 +
  \|v^{K}\|_{L^{\sigma}(\Omega)}^{p\theta}
  \|v^{K}\|_{L^{r}(\Omega)}^{p(1-\theta)}\big) . 
\end{displaymath}
Hence for some $C$ independent of $K$ and for $0<t <T$,  
\begin{displaymath}
   \|f(v^{K})\|_{L^{\alpha}(\Omega)} \leq C \left(1 + {1 \over
     t^{p\theta \over p-1}} \right) = C \left(1 + {1 \over
     t^{p' \theta}} \right) . 
\end{displaymath}
Therefore, if $\theta p' <1$ we have  
\begin{displaymath}
h_{1}^{K} \in L^{1}([0,T],L^{\alpha}(\Omega_{0}))  
\end{displaymath}
with norms bounded independent of $K$.

Now we show that we can chose  $\alpha$ and $\sigma$ as  above. In
fact,  we have ${1 \over \alpha  p} = \theta 
  \big( {1 \over \sigma} - {1 \over r}\big) + {1 \over r}$ 
which,  using that $\sigma > r$, we write as 
\begin{displaymath}
 {p' \over r} - {p' \over \alpha  p} = \theta p'
  \left( {1 \over r} - {1 \over \sigma}\right)  .
\end{displaymath}
Hence the condition $\theta p' <1$ can be met provided 
\begin{displaymath}
0 < {1 \over \sigma} <   {p' \over \alpha p}     - {p'-1 \over r} = {1 \over
  p-1} \left( {1 \over \alpha} - {1 \over r }\right) 
\end{displaymath}
since  ${p' \over  p} = p' -1 = {1 \over  p-1}$. Note that in
particular this implies that $\alpha < r$. 

On the other hand, using the conditions  $\alpha \geq 1$,  $\alpha p > r$
and  $\sigma > \alpha p$,  we are bound to chose $\alpha, \sigma$ such
that 
\begin{displaymath}
0< {1 \over \sigma} <  \min\left\{  {1 \over
  p-1} \left( {1 \over \alpha} - {1 \over r }\right), {1 \over \alpha p
}\right\}, \quad  \max\left\{{r \over p }, 1\right\}  < \alpha < r. 
\end{displaymath}
Hence for any  $\alpha<r$ there exists such $\sigma$.  

Now the variations of constants formula for
(\ref{eq:truncated_equation_vK}) gives 
\begin{displaymath}
  z^{K}(t) = S_{D}(t) u_{0}\varphi + \int_{0}^{t}S_{D}(t-s) h^{K}(s) \, ds 
\end{displaymath}
where $S_{D}(t)$ denotes the strongly continuous analytic semigroup generated by
$\Delta$  with homogeneous  Dirichlet boundary conditions.

Now we use that  $h^{K} = h_{1}^{K} + h_{2}^{K} + h_{3}^{K}$ and denote $z_{1}$, $z_{2}$ and
$z_{3}$ the corresponding three terms resulting in the integral term
above. Then results on linear equations, see e.g. Theorem 4 in
\cite{ARB11:_pertur_banac}, gives that $z_{2}, 
z_{3} \in C([0,T], L^{r}(\Omega_{0}))$ and $z_{1} \in C([0,T],
L^{\alpha}(\Omega_{0}))$  with with norms bounded 
independent of $K$, $z_{i}(0)=0$ and
\begin{displaymath}
\sup\{\|z_{1}(t)\|_{L^{\alpha}(\Omega_{0})}, \|z_{2}(t)\|_{L^{r}(\Omega_{0})},
 \|z_{3}(t)\|_{L^{r}(\Omega_{0})}  , \ t\in [0,T]\}\leq C(T)  
\end{displaymath}
with $C(T)$ independent of $K$ and $C(T)\to 0$ as $T\to 0$.

Finally as $ S_{D}(t)
u_{0}\varphi  \to u_{0}\varphi$ in $L^{r}(\Omega_{0})$ as $t\to 0$, we
get that for any $\eps>0$ there exist $\delta>0$, independent of $K$,  such that 
\begin{displaymath}
  \|z^{K}(t) - u_{0}\varphi \|_{L^{\alpha}(\Omega_{0})} \leq \eps, \quad
  t \in (0,\delta] . 
\end{displaymath}
In particular, restricting to $\Omega_{1}$, we get that  
any $\eps>0$ there exist $\delta>0$, independent of $K$,  such that 
\begin{displaymath}
  \|v^{K}(t) - u_{0} \|_{L^{\alpha}(\Omega_{1})} \leq \eps, \quad   t \in
  (0,\delta] , 
\end{displaymath}
which reflects the local equicontinuity at $t=0$ of the family $v^{K}$. 

Therefore, passing to the limit along any subsequence as in
Proposition \ref{prop:regFromBounds}, we get that any solution of
(\ref{eq:pbFG:monotonic}) as in Definition
\ref{def:solution_supercritical} satisfies 
\begin{displaymath}
  \|v(t) - u_{0} \|_{L^{\alpha}(\Omega_{1})} \leq \eps, \quad   t \in
  (0,\delta]   
\end{displaymath}
and the result is proved. 
\end{proof}

\section{Asymptotic behavior}
\label{sec:asymptotic-behaviour}

Due to the strong smoothing properties obtained above we can actually
show now that all solutions of (\ref{eq:pbFG:monotonic}) as in
Definition \ref{def:solution_supercritical} regularize into a space in which
(\ref{eq:pbFG:monotonic}) is subcritical and then the asymptotic
behaviour of solutions is described by the global attractor in Theorem
\ref{th:attractor_subcritical}.

In fact, given a bounded set $B$ of initial data in $1<r<r_0$, from
Propositions 
\ref{prop:unifBound:genSols} and \ref{prop:regFromBounds} we have that
all solutions as in Definition \ref{def:solution_supercritical} starting at $B$ are 
uniformly bounded in $L^\sigma(\Omega)$ at any time $t \geq \eps$, for any
$1<\sigma<\infty$. Then, for $\sigma > r_{0}$,  problem
(\ref{eq:pbFG:monotonic}) is subcritical and we can use Theorem
\ref{th:attractor_subcritical}.

The following result summarises the results concerning the asymptotic
behaviour of solutions of properties (\ref{eq:pbFG:monotonic}) for
initial data in $L^{r}(\Omega)$ with $1<r<r_{0}$. 

\begin{theorem}
  \label{thm:existExtremal:supcrit}

Under the assumptions (\ref{eq:mainHyp:fg}) and
(\ref{eq:nonlinearbalance_fg}), for $1<r<r_{0}$ we have:

\noindent i) There exists  a compact invariant set $\mathcal{A}\subset
C^\beta(\overline\Omega) 
\cap H^{\alpha}_{s}(\Omega)\subset L^{r}(\Omega)$, for any $0\leq
\beta < 1$,  $s\geq 1$ and $0\leq \alpha <1 + \frac{1}{s}$, attracting
the solutions as in Definition \ref{def:solution_supercritical}
starting in bounded sets of $L^r(\Omega)$ in the norm of
$C^\beta(\overline\Omega)$ or $H^{\alpha}_{s}(\Omega)$.  

In particular, there exists an  absorbing
set in $C^\beta(\overline\Omega) \cap H^{\alpha}_{s}(\Omega)$. Also
$\mathcal{A} = W^{u}(E) $, that is,  the unstable set of
the set of equilibria of (\ref{eq:pbFG:monotonic}), $E$,  which is
nonempty. Hence $\mathcal{A}$ is independent of $r$ and coincides with
the set in Theorem \ref{th:attractor_subcritical}.

\noindent ii)   There exist two extremal equilibria $\varphi_m \leq
\varphi_{\mathcal{M}}$ for   problem (\ref{eq:pbFG:monotonic})  such
that $\varphi_m, \varphi_{\mathcal{M}} \in \mathcal{A}$. Hence any
stationary solution $\varphi$ of (\ref{eq:pbFG:monotonic}) satisfies 
\begin{displaymath}
\varphi_m(x) \leq \varphi(x) \leq \varphi_{\mathcal{M}}(x) , \quad
x\in \Omega . 
\end{displaymath}

Furthermore, 
  \begin{displaymath}
    \label{eq:extremalAttracting:subcrit}
    \varphi_m(x) \leq \liminf_{t\to\infty} u(t,x;u_0) \leq
    \limsup_{t\to\infty} u(t,x;u_0) \leq \varphi_{\mathcal{M}}(x)
  \end{displaymath}
  uniformly in $x\in\Omega$ and for $u_0$ in bounded sets of
  $L^r(\Omega)$. 

The   maximal equilibrium is (order-)stable from above and the minimal
one  from below.  

\end{theorem}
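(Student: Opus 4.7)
My overall strategy is to use the strong smoothing of Propositions \ref{prop:unifBound:genSols} and \ref{prop:regFromBounds} to reduce part i) to the subcritical case, and then to use the truncated problem plus comparison to handle part ii).

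For part i), I would argue that given a bounded set $B\subset L^r(\Omega)$ and $\epsilon>0$, Propositions \ref{prop:unifBound:genSols} and \ref{prop:regFromBounds} give a uniform $L^{\infty}(\Omega)$ bound on $\{u(\epsilon;u_0):u_0\in B\}$. In particular this set is bounded in $L^{\sigma}(\Omega)$ for every $\sigma<\infty$. Fixing $\sigma>r_{0}$, the problem is subcritical in $L^\sigma(\Omega)$ and Theorem \ref{th:attractor_subcritical} supplies a compact global attractor $\mathcal{A}$ (independent of the choice of $\sigma>r_0$) with the regularity $C^\beta(\overline\Omega)\cap H^\alpha_s(\Omega)$ stated in iv) of that theorem, and satisfying $\mathcal{A}=W^u(E)$. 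Using the semigroup identity $u(t;u_0)=u(t-\epsilon;u(\epsilon;u_0))$ valid on the subcritical orbit through $u(\epsilon;u_0)$, the attraction of bounded sets in $L^\sigma(\Omega)$ transfers to attraction of bounded sets of $L^r(\Omega)$ in the norms of $C^\beta(\overline\Omega)$ or $H^\alpha_s(\Omega)$, and the existence of an absorbing set in those spaces follows in the same way.

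For part ii), the plan is to build $\varphi_m,\varphi_{\mathcal{M}}$ as limits of extremal equilibria of the truncated problem (\ref{eq:truncated_problem}), then exploit the sandwich estimate already used in Proposition \ref{prop:unifBound:genSols}. For each $K$, problem (\ref{eq:truncated_problem}) has a superlinear dissipative $f$ and a linearly growing $g_K$, so it falls in the scope of \cite{Rodr'iguez-Bernal2008}; from there I obtain extremal equilibria $\varphi_m^K\le\varphi_{\mathcal{M}}^K$ inside the attractor of the truncated problem. The monotonicity (\ref{eq:gKincreases_in_K}) of $sg_K(s)$ in $K$, together with the comparison principle, shows that $\varphi_{\mathcal{M}}^K$ is nondecreasing in $K$ and $\varphi_m^K$ is nonincreasing. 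Uniform $L^\infty$ bounds on $\{\varphi_{\mathcal{M}}^K\}_K$ come by viewing each $\varphi_{\mathcal{M}}^K$ as the limit as $t\to\infty$ of a positive trajectory of (\ref{eq:truncated_problem}), and bounding that trajectory by the auxiliary subcritical problem $U(t;|v^K(\epsilon)|)$ introduced in the proof of Proposition \ref{prop:unifBound:genSols}; the analogue works for $\varphi_m^K$ using $W$. These bounds are $K$-independent because they rely on the original $f,g$ and on Theorem \ref{th:attractor_subcritical}. Passing to the limit $K\to\infty$ using compactness in $C(\overline\Omega)$ (via the smoothing in part i)) yields stationary solutions $\varphi_m\le\varphi_{\mathcal{M}}$ of the original problem (\ref{eq:pbFG:monotonic}), and any equilibrium $\varphi$ of (\ref{eq:pbFG:monotonic}) lies in $L^\infty$ by part i) hence is an equilibrium of (\ref{eq:truncated_problem}) for all sufficiently large $K$, giving $\varphi_m^K\le\varphi\le\varphi_{\mathcal{M}}^K$ and, in the limit, $\varphi_m\le\varphi\le\varphi_{\mathcal{M}}$.

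For the asymptotic bounds I would use the sandwich estimate $W(t;-|v^K(\epsilon)|)\le v^K(t+\epsilon)\le U(t;|v^K(\epsilon)|)$ established in Proposition \ref{prop:unifBound:genSols}. Taking $K\to\infty$ and then $t\to\infty$, the upper envelope $U$ converges in the attractor of its own subcritical problem, and a standard monotonicity argument shows its $\omega$-limit lies below $\varphi_{\mathcal{M}}$; analogously for the lower envelope. This yields $\liminf u\ge\varphi_m$ and $\limsup u\le\varphi_{\mathcal{M}}$ uniformly in $x$ and in $u_0$ in bounded sets of $L^r(\Omega)$. The order stability of $\varphi_{\mathcal{M}}$ from above and $\varphi_m$ from below follows from the monotone decrease (resp.\ increase) of trajectories starting pointwise above $\varphi_{\mathcal{M}}$ (resp.\ below $\varphi_m$), which is a consequence of the same $K\to\infty$ approximation together with comparison.

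The main obstacle I foresee is proving the $K$-independent $L^\infty$ bound on $\varphi_{\mathcal{M}}^K$: the generic absorbing bounds from Theorem \ref{th:attractor_subcritical} applied to the truncated problem depend on the nonlinearity $g_K$ and thus on $K$. Overcoming this requires combining the $K$-uniform $L^\sigma$ bounds of Proposition \ref{prop:bounds_vK_Lsigma} with the comparison argument of Proposition \ref{prop:unifBound:genSols}, which is precisely the mechanism by which the original $g$ (and not $g_K$) controls the trajectories uniformly in $K$.
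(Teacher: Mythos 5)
Your part i) is essentially the paper's argument: the uniform bounds of Propositions \ref{prop:unifBound:genSols} and \ref{prop:regFromBounds} put the solutions, for $t\geq\eps$, into a bounded set of $L^{\sigma}(\Omega)$ with $\sigma>r_{0}$, where Theorem \ref{th:attractor_subcritical} applies; that part is fine. The gap is in part ii). The paper does not go back to the truncated problems at all: from part i) one has an absorbing set in $L^{\infty}(\Omega)$, hence an absorbing \emph{ordered interval} $[-M,M]$ for (\ref{eq:pbFG:monotonic}) itself, and Theorem 1.1 of \cite{Rodr'iguez-Bernal2008} then yields in one stroke the extremal equilibria, the asymptotic estimates and the order stability. (This is the whole point of the theorem: the obstruction to using \cite{Rodr'iguez-Bernal2008}, noted in the introduction, was the lack of an absorbing interval, which part i) now supplies.)

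Your route through $\varphi_m^K\le\varphi_{\mathcal M}^K$ could in principle produce the extremal equilibria (the monotonicity of $\varphi_{\mathcal M}^K$ in $K$ is dubious for sign-changing equilibria, since (\ref{eq:gKincreases_in_K}) gives $g_{K_1}\le g_{K_2}$ only for $s\ge 0$, but compactness repairs that, as you note). The step that fails is the asymptotic estimate. You bound $v^K(t+\eps)$ above by $U(t;|v^K(\eps)|)$, the solution of the auxiliary problem with nonlinearities $f^-\le f$ and $g^+\ge g_K$; the $\omega$-limit of $U$ is controlled by the maximal equilibrium of \emph{that} auxiliary problem, which in general lies strictly \emph{above} $\varphi_{\mathcal M}$, so you cannot conclude $\limsup_{t\to\infty}u(t,x;u_0)\le\varphi_{\mathcal M}(x)$ this way. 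Likewise, "monotone decrease of trajectories starting pointwise above $\varphi_{\mathcal M}$" is not available: a large constant $M$ is not a supersolution of (\ref{eq:pbFG:monotonic}) (one has $g(M)>0$), so trajectories above $\varphi_{\mathcal M}$ need not decrease, and order stability from above is a genuinely finer statement. Both the attraction estimate and the order stability must come from the dynamics of (\ref{eq:pbFG:monotonic}) restricted to the absorbing interval $[-M,M]$, i.e.\ from Theorem 1.1 (and Corollary 3.11) of \cite{Rodr'iguez-Bernal2008}, not from comparison with the modified problems.
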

\begin{proof}
Note that in Propositions \ref{prop:positiveSolns} and
\ref{prop:unifBound:genSols} the bounds on
the functions $v^{K}$ are independent of $t\geq \eps$,  $K$ and  
$u_{0} \in L^{r}(\Omega)$.  Hence the bounds on $v(t;u_{0})$, as in
Proposition \ref{prop:regFromBounds} are also independent of $t\geq
\eps$ and  $u_{0} \in L^{r}(\Omega)$. Then for $t\geq \eps$ all
solutions of (\ref{eq:pbFG:monotonic}) starting at a bounded set of
initial data $B \subset L^{r}(\Omega)$ enter a bounded set in
$L^{\sigma}(\Omega)$ for $\sigma >r_{0}$. Thus, part  i) follows from
Theorem \ref{th:attractor_subcritical}. 

Now for part ii) note that from i) we have an absorbing set in
$L^{\infty}(\Omega)$ for all solutions of
(\ref{eq:pbFG:monotonic}). In particular, for some $M>0$ the ordered interval of
functions in $\Omega$ such that $-M \leq h(x) \leq M$ for all $x \in
\Omega$ in an absorbing interval. Hence we can use Theorem 1.1 in
\cite{Rodr'iguez-Bernal2008} get the existence of two
  extremal equilibria $\varphi_m, \varphi_{\mathcal{M}} \in L^r(\Omega)$. The
  maximal equilibrium is (order-)stable from above and the minimal one
  from below. See also Corollary 3.11 in
  \cite{Rodr'iguez-Bernal2008}. 
\end{proof}

\begin{remark}
\mbox{}

\noindent i)  In particular note that for $r=r_{0}$ $\mathcal{A}$
attracts bounded sets of $L^{r_{0}}(\Omega)$ and therefore Theorem
\ref{thm:existExtremal:supcrit} improves the results in Theorem
\ref{th:attractor_subcritical} for this critical space. 

\noindent ii) Restricting only to nonnegative solutions, we have
uniqueness for (\ref{eq:pbFG:monotonic}) as in Proposition
\ref{prop:positiveSolns}. Using that solutions are smooth after
positive time, Theorem 4.5 in \cite{Rodr'iguez-Bernal2008} allows a
more detailed description of the asymptotic behavior of such solutions. 

\end{remark}

\end{document}